\DeclareMathOperator{\I}{Id}
\newtheorem{Theorem}{Theorem}[section]
\newtheorem{Definition}{Definition}[section]
\newtheorem{Remark}{Remark}[section]
\newtheorem{Proposition}{Proposition}[section]
\newtheorem{Lemma}{Lemma}[section]
\newtheorem{Corollary}{Corollary}[section]
\title{A characterization of varieties of algebras of proper central exponent equal to two}
\author{Francesca S. Benanti}
\address{Dipartimento di Matematica e Informatica, Universit\`a di Palermo, Via Archirafi 34, 90123, Palermo, Italy}
\email{francescasaviella.benanti@unipa.it}
\author{Angela Valenti}
\address{Dipartimento di Ingegneria, Universit\`a di Palermo, Viale delle Scienze, 90128, Palermo, Italy}
\email{angela.valenti@unipa.it}
\thanks{Both authors are partially supported by INDAM-GNSAGA of Italy
	and  by  Fondo Finalizzato alla Ricerca dell'Università degli Studi di Palermo  (FFR) – Anno 2024}
\subjclass[2020]{Primary 16R10, 16R99; Secondary 16P90}
\keywords{central polynomial, polynomial identity, growth}
\begin{document}
	
	\begin{abstract}
		Let $F$ be a field of characteristic zero and let $ \mathcal V$ be a variety of associative
		$F$-algebras.  In \cite{regev2016} Regev introduced a numerical sequence measuring the growth of the proper central polynomials of a generating algebra of $ \mathcal V$. Such sequence $c_n^\delta(\mathcal V), \, n \ge 1,$ is called the sequence of proper central polynomials of $ \mathcal V$ and in \cite{GZ2018}, \cite{GZ2019} the authors computed its exponential growth. This is an invariant of the variety. They also showed that  $c_n^\delta(\mathcal V)$ either grows exponentially or is polynomially bounded.
		
		The purpose of this paper is to characterize the varieties of associative
		algebras whose
		exponential growth of $c_n^\delta(\mathcal V)$ is greater than two. As a consequence, we find a characterization
		of the varieties whose corresponding exponential growth is equal to two.
	\end{abstract}
	
	\maketitle

	\section{Introduction}
	Let $A$ be an algebra over a field  $F$ of characteristic zero and  $F\langle X\rangle$ the free associative  algebra freely generated over $F$ by  the set
	$X=\{x_1,x_2,\ldots \}$.
	A  polynomial $f=f(x_1, \ldots , x_n)\in F\langle X\rangle$ is a central polynomial of $A$  if, for any $a_1, \ldots , a_n \in A$,  $f(a_1, \ldots , a_n)\in
	Z(A),$
	the center of $A$. In case
	$f$ takes a non-zero value in $A$, we say that $f$ is a proper central polynomial,
	otherwise we say that $f$ is a polynomial identity of $A$.
	We denote by Id$(A)$ the $T$-ideal of $F\langle X\rangle$ consisting of the polynomial identities of $A$
	and by Id$^z(A)$ the $T$-space of central polynomials of $A$.
	
	Central polynomials were first studied after a famous conjecture of Kaplansky asserting that  $M_k(F)$, the algebra of $k\times k$ matrices over $F$,  has
	proper central polynomials (see \cite{Kaplansky}). Later on, such a conjecture was proved independently by Formanek in \cite{Formanek}  and Razmyslov in
	\cite{Razmyslov}.
	This result is highly non-trivial since even if an algebra has a non-zero center, the existence of proper central
	polynomials is not granted. For instance, it is well-known (see \cite{GZ2018}) that
	the algebra of upper block triangular matrices has no proper central polynomials.
	
	Here we are interested in a quantitative approach in order to get information about the growth of the proper central polynomials of a given  algebra. To this
	end, for any
	$n \ge 1$  let $P_n$ be the space of multilinear polynomials in $x_1, \ldots, x_n$.
	Then define $P_n(A)$ as $P_n$ modulo  the identities of $A,$ and  $P_n(A)^z$ as $P_n$ modulo the central polynomials of $A$.
	The corresponding dimensions $c_n(A)=\dim P_n(A)$ and $c_n^z(A)=\dim P_n(A)^z$ are called the $n$-th codimension
	and the $n$-th central codimension of $A$, respectively.
	Clearly $c_n(A)- c_n^z(A)= c_n^\delta(A)$ is the dimension of the corresponding space of proper central polynomials.
	
	The asymptotics of the codimensions of an algebra $A$ have been extensively studied in the past, see for instance \cite{reg1}, \cite{AJK}, \cite{B}, \cite{BR}
	or
	\cite{GZbook} and \cite{AGPR} for a comprehensive overview of the main results.
	It turns out that if $A$ satisfies a non-trivial identity, then $\lim_{n\to \infty} \sqrt[n]{c_n(A)}=exp(A)$ exists, and is an integer called the PI-exponent
	of $A$ (see \cite{GZ2}). Recently in \cite{GZ2018} and \cite{GZ2019} it was proved that also $\lim_{n\to \infty}  \sqrt[n]{c_n^z(A)} =exp^z(A)$
	and $\lim_{n\to \infty}  \sqrt[n]{c_n^\delta(A)} =exp^\delta(A)$ exist,
	are integers  called the central exponent and the proper central exponent of $A$, respectively. Moreover all three codimensions either grow exponentially or
	are polynomially bounded.
	When $exp(A)\ge 2$,   $exp(A)$ and  $exp^z(A)$ coincide, whereas
	it can be shown that the difference  $exp(A) -exp^\delta(A)$ can be any positive integer (see \cite{GZ2018}).
	Hence $exp^\delta(A)$ is a second invariant of the $T$-ideal $\I(A)$ that is worth investigating.
	
	Let $G$ denote the infinite dimensional Grassmann algebra and $UT_2(F)$  the algebra of $2\times 2$ upper triangular matrices. Kemer in \cite{kemer0} proved
	that if an algebra $A$ is such that $exp(A)\ge 2$, then either  $\I(G)\supseteq \I(A)$ or $\I(UT_2(F)) \supseteq \I(A)$. Since
	it is well-known that $exp(G)=exp(UT_2)=2$ (see \cite{GZbook}) and no intermediate growth is allowed,
	it follows that if $B$ is an algebra such that $\I(B)\supsetneq \I(G)$ or $\I(B) \supsetneq \I(UT_2(F))$ then the codimensions of $B$ grow polynomially.
	In the language of varieties of algebras, this says that $G$ and $UT_2(F)$
	generate the only two varieties of almost polynomial growth.
	
	Recently in  \cite{GLP} an analogous result was  proved  in the setting of proper central polynomials. The authors
	constructed two finite dimensional algebras named $D$ and $D_0$, having exponential growth of the proper central codimensions and
	classified up to PI-equivalence, the algebras $A$ for which the sequence
	$c_n^{\delta}(A)$, $n\ge 1$, has almost polynomial growth.
	It  follows that $G,$  $D$ and $D_0$ generate the only varieties of almost polynomial growth of the proper central polynomials.
	
	In \cite{GZ2000},  Giambruno and Zaicev characterized the varieties of PI-exponent equal to two by exhibiting a
	list of five suitable algebras of exponent 3 or 4.
	
	The purpose of this paper is to characterize the varieties having proper central exponent
	greater than two. To this end, we shall explicitly exhibit a  list of nine
	algebras $\mathcal{A}_i$ in order to prove the following result: a variety $\mathcal{V}$ has proper central exponent
	greater than two if and only if $\mathcal{A}_i \in \mathcal{V},$ for some $i$. By putting together this theorem
	with the results in \cite{GLP},
	we shall obtain a characterization of the varieties of proper central exponent equal to two.
	
	\bigskip

	\section{Preliminaries} \label{section2}

	Throughout this paper all algebras will be associative and over a field $F$ of characteristic zero.
	Unless otherwise stated, we shall also assume that $F$ is algebraically closed.
	Let
	$F\langle X\rangle$ be the free algebra over $F$ on a countable set $X=\{x_1, x_2, \ldots\}$.
	Let Id$(A)$ be the $T$-ideal of polynomial identities of the $F$-algebra $A$ and
	let Id$^z(A)$ be the space of central polynomials of $A.$ Notice that Id$^z(A)$
	is a $T$-space, i.e., invariant under all endomorphisms of $F\langle X\rangle.$
	Clearly Id$(A) \subseteq$ Id$^z(A)$ and the proper central polynomials correspond to the
	quotient space Id$^z(A)$/Id$(A)$.

	Regev in \cite{regev2016} introduced the notion of central codimensions as follows.
	Let
	$P_n$ be the space of  multilinear polynomials in $x_1, \ldots, x_n$. Then,
	for $n=1, 2, \ldots$,
	$$
	c_n(A)= \dim \frac{P_n}{P_n\cap \I(A)}, \, \,
	c_n^z(A)= \dim \frac{P_n}{P_n\cap \I^z(A)},  \, \,
	c_n^\delta(A)= \dim \frac{P_n\cap \I^z(A)}{P_n\cap \I(A)}
	$$
	are
	the sequences of codimensions, central codimensions and proper central codimensions of $A$, respectively.

	It was proved in \cite{GZ1}, \cite{GZ2}, \cite{GZ2018} and \cite{GZ2019} that for any PI-algebra $A$ the following three limits
	$$
	exp(A)=\lim_{n\to\infty}\sqrt[n]{c_n(A)},\, \, exp^z(A)=\lim_{n\to\infty}\sqrt[n]{c_n^z(A)}, \, \, exp^\delta(A)=\lim_{n\to\infty}\sqrt[n]{c_n^\delta(A)}
	$$
	exist and are non negative integers called the PI-exponent, the central exponent and the proper central exponent (called also $\delta$-exponent) of the algebra
	$A$, respectively.
	Clearly $exp^z(A), exp^\delta(A)\le exp(A)$ and in \cite[Theorem 3]{GZ2019} it was shown that  if $\exp(A)\ge 2$, then $exp^z(A)= exp(A)$.
	A basic property of the above three numerical sequences is that they either grow exponentially or are polynomially bounded.
	Hence no intermediate growth is allowed.

	Let us introduce a useful tool in the theory of polynomial identities, the
	Grassmann envelope of an algebra.
	Let $G$ be the infinite dimensional Grassmann  algebra, that is the algebra with $1$ generated by countably many elements $e_1, e_2, \ldots$,
	subject to the condition  $e_ie_j=-e_je_i$. $G$ has a natural structure of superalgebra $G=G^{(0)}\oplus G^{(1)}$ where
	every monomial in the $e_i$'s of even (odd resp.) length has homogeneous degree zero (one resp.).
	
	If $B= B^{(0)}\oplus B^{(1)}$ is a superalgebra, one can construct the algebra   $G(B)=( G^{(0)} \otimes B^{(0)}) \oplus ( G^{(1)} \otimes B^{(1)})$
	which is called the Grassmann envelope of $B$.
	The importance of the Grassmann envelope is highlighted in  a well-known result of Kemer (see \cite{kemer}) stating that if $A$ is any PI-algebra, there exists
	a finite dimensional superalgebra $B$ such that $\I(A)=\I(G(B))$.

	It is worth noticing that if $A$ is a finite dimensional algebra, then regarding $A$ as an algebra with trivial grading we get
	$\I(G(A))= \I( G^{(0)} \otimes A)=\I(A)$,  and we may work with $A$ instead of $G(A)$.

	Now we recall how to compute the exponent and the proper central exponent
	of the algebra $G(B)$.
	By the Wedderburn-Malcev decomposition we write $B=\bar B +J$  where $\bar B$ is a maximal semisimple superalgebra inside $B$
	and $J$ is the Jacobson radical of $B$ (see \cite[Theorem 3.4.3]{GZbook}).  Also we can write $\bar B= B_1\oplus \cdots \oplus B_q$, a decomposition into
	simple superalgebras.
	
	Let $B_{i_1}, \ldots, B_{i_k}$ be distinct simple subalgebras of $\bar B$.
	Then we say that $C=B_{i_1}\oplus \cdots \oplus B_{i_k}$ is an admissible subalgebra of $B$
	if $B_{i_1}J \cdots J B_{i_k}\ne 0$. Moreover we say that $C$ is a centrally admissible subalgebra of $B$,
	if there exists a proper central polynomial $f(x_1, \ldots, x_n)$ of $G(B)$ having a non-zero evaluation involving  at least one element of each
	$G(B_{i_h})$, $1\le h \le k$.
	
	Then $exp(G(B))$ is the maximal dimension of an admissible subalgebra of $B$ and $exp^\delta(G(B))$ is the maximal dimension of a
	centrally admissible subalgebra of $B$.

	Recall that if $\mathcal V=var(A)$ is the variety of algebras generated by an algebra $A$, then we write $exp(\mathcal V)=exp(A)$
	and $exp^\delta(\mathcal V)=exp^\delta(A)$. Hence the growth of $\mathcal V$ is the growth of the
	codimensions of the algebra $A$.
	Moreover
	a variety $\mathcal V$ has almost polynomial growth if $\mathcal V$ grows exponentially but any
	proper subvariety grows polynomially.

	By \cite[Remark 1]{GZ2019} if two algebras have the same identities, they also have the same proper central polynomials.
	Hence the proper central exponent of an algebra $A$ is an invariant of its $T$-ideal of identities and also of the corresponding variety of algebras. So we say
	that
	a variety of algebras ${\mathcal V}$  has almost polynomial $\delta$-growth if $exp^\delta(\mathcal V)\ge 2$
	and for any proper subvariety ${\mathcal W}\subsetneq {\mathcal V}$ we have that $exp^\delta(\mathcal W)\le 1$.
	By abuse of notation we also say that an algebra $A$ has almost polynomial $\delta$-growth if $exp^\delta(A)\ge 2$
	and for any algebra $B$ such that $\I(B) \supsetneq \I(A)$, we have that $exp^\delta(B)\le 1$.

	Recently in \cite{GLP} the authors classified the $T$-ideals whose corresponding varieties have almost polynomial growth of the proper central codimensions.
	More precisely,  it was proved that if $A$ is a PI-algebra with $exp^\delta(A)\geq 2$, then Id$(A)\subseteq$ Id$(D)$ or Id$(A)\subseteq$ Id$(D_0)$ or
	Id$(A)\subseteq$ Id$(G)$, where $D=\{(a_{ij})\in UT_3(F) \mid a_{11}=a_{33}\}$ and
	$D_0=\{(a_{ij})\in UT_4(F) \mid a_{11}=a_{44}=0\}$.
	Here $UT_n(F)$ denotes the algebra of $n\times n$ upper triangular matrices over $F$.
	
	\bigskip

	\section{Some algebras of small $\delta$-exponent greater than two}

	The purpose of this section is to introduce some suitable algebras that
	will allow us to prove the main result of this paper.
	We start by recalling some notation and definitions.
	
	For an ordered set of positive integers $(d_1,\ldots,d_m)$ let
	$UT(d_1,\ldots ,d_m)$
	denote the algebra of  upper block-triangular
	matrices over $F$ of size $d_1,\ldots,d_m$, i.e.,
	
	$$
	UT(d_1,\ldots ,d_m)=\begin{pmatrix}M_{d_1}(F) & \cdots  & *  & * \cr
		0 & \ddots & &  \cr
		\vdots &   &  & * \cr
		0 & \cdots & 0 & M_{d_m}(F)
	\end{pmatrix},
	$$
	
	\noindent
	where $M_{d_i}(F)$ is the algebra of $d_i \times d_i$ matrices over
	$F$.
	
	Recall that if  $H$ is an arbitrary abelian group an $H$-grading on $M_n(F)$ is elementary if
	there is an $n$-tuple
	$\bar h =(h_1,\ldots,h_n)$ such that  the matrix units
	$e_{ij}$ are of homegeneous degree $ h_i^{-1}h_j.$
	Clearly the algebra of upper block triangular matrices also admits elementary gradings. In fact, the embedding of such an algebra into a full matrix algebra
	with an elementary grading makes it a homogeneous subalgebra.
	We shall denote by $UT(d_1,\ldots,d_m)_{(h_1,\ldots,h_n)}$ the algebra of upper block triangular matrices with elementary grading induced by the $n$-tuple
	$\bar h=(h_1,\ldots,h_n)$ with $n=d_1+\cdots +d_m.$
	In particular, for $H=\mathbb Z_2$, $UT(d_1,\ldots,d_m)_{(h_1,\ldots,h_n)}$ is a superalgebra and $G(UT(d_1,\ldots,d_m)_{(h_1,\ldots,h_n)})$ denotes its
	Grassmann envelope.

	We also need to recall the classification of the finite dimensional simple superalgebras over $F$ (see \cite{kemer}, pag.21).
	Any such superalgebra is isomorphic to one of the following:
	\begin{enumerate}
		\item[i)]  $M_{k,l}(F)$ is the algebra $M_{k+l}(F)$,  $k\geq l\geq 0$, $k\neq 0$,
		endowed with the grading induced by the $(k+l)$-tuple $(\underbrace{0, \ldots ,0}_{\text{$k$ times}},\underbrace{1, \ldots ,1}_{\text{$l$ times}})$;
		\item[ii)] $M_k(F) + cM_k(F)$, where $c^2=1$, with grading $(M_k(F), cM_k(F))$.
	\end{enumerate}
	We shall denote by $M_{k,l}(G)$ and $M_k(G)$, respectively,  the corresponding Grassmann envelopes.
	In what follows we shall identify $F + cF$ with the graded subalgebra of $M_2(F)_{(0,1)}$ generated by the elements
	$c_0=e_{11}+e_{22},$ and $c_1=e_{12}+e_{21}.$

	\medskip
	
	Next we define the following nine algebras over $F$ that we shall use in the sequel.
	
	\begin{enumerate}
		\item[ ]  $\mathcal{A}_1=M_2(F)$,
		\smallskip
		\item[ ] $\mathcal{A}_2=M_{1,1}(G)$,
		\smallskip
		\item[ ]
		$ \mathcal{A}_3=\{ (a_{ij}) \in UT_4(F) \mid a_{11}=a_{44}\}$,
		
		\medskip
		\item[ ]
		$\mathcal{A}_4= \{ (a_{ij}) \in UT_5(F) \mid a_{11}=a_{55}=0\}$,
		
		\medskip
		
		\item[ ]
		$\mathcal{A}_5= \{(a_{ij}) \in UT_3(G) \mid a_{11}=a_{33},\  a_{22} \in G^{(0)}\}$,

		\medskip
		
		\item[ ]
		$\mathcal{A}_6= \{ (a_{ij}) \in G(UT(2,3)_{(0,0,0,1,0)})\mid a_{11}=a_{21}=a_{53}=a_{54}=a_{55}=0,\ a_{33}=a_{44}, \  a_{34}=a_{43} \}$,

		\medskip
		
		\item[ ]
		$\mathcal{A}_7= \{ (a_{ij}) \in G(UT(3,2)_{(0,0,1,0,0)}) \mid a_{11}=a_{21}=a_{31}=a_{54}=a_{55}=0,\ a_{22}=a_{33}, \ a_{23}=a_{32}\}$,
		
		\medskip
		\item[ ]
		$\mathcal{A}_8=\{(a_{ij})\in G(UT(1,2,1)_{(0,0,1,0)})  \mid a_{11}=a_{44},\ a_{22}=a_{33}, \, a_{23}=a_{32}\}$,

		\medskip
		\item[ ]
		
		\noindent
		$
		\mathcal{A}_9=   \{ (a_{ij}) \in G(UT(1,2,1)_{(0,0,1,1)}) \mid a_{11}=a_{44},\ a_{22}=a_{33},\  a_{23}=a_{32}\}.$

	\end{enumerate}
	
	\bigskip
	It is well known that $exp(M_2(F))=exp^\delta(M_2(F))=4$
	and
	$exp(M_{1,1}(G))=exp^\delta(M_{1,1}(G))
	=4.$ For the remaining algebras we have the following
	
	\begin{Lemma} \label{exponent Ai}
		For $i=3,\ldots , 9$, $exp(\mathcal A_i)=exp^\delta(\mathcal A_i)=3$.
	\end{Lemma}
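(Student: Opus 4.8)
The plan is to reduce everything to the structural description of $\exp$ and $\exp^\delta$ recalled in Section~\ref{section2}: $\exp(G(B))$ (resp. $\exp^\delta(G(B))$) is the maximal dimension of an admissible (resp. centrally admissible) subalgebra of $B$, and $\exp^\delta\le\exp$. Accordingly the first step is to present each $\mathcal A_i$ as $G(B_i)$ for an explicit finite dimensional superalgebra $B_i$. For $i=3,4$ the algebra $\mathcal A_i$ is finite dimensional, so we take $B_i=\mathcal A_i$ with the trivial grading, whence $\I(G(B_i))=\I(\mathcal A_i)$. For $i=5$ I would use the identification $UT_3(G)\cong G(UT_3(F)\otimes(F+cF))$, which sends $x\otimes1+y\otimes c$ (with $x$ even, $y$ odd) to the corresponding matrix over $G$, to obtain $\mathcal A_5=G(B_5)$ with $B_5=\{\,x\otimes1+y\otimes c\mid x\in D,\ y\in D_0'\,\}$, where $D=\{x\in UT_3(F)\mid x_{11}=x_{33}\}$ and $D_0'=\{y\in UT_3(F)\mid y_{11}=y_{33},\ y_{22}=0\}$. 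For $i=6,\dots ,9$ the algebra $\mathcal A_i$ is by definition a homogeneous subalgebra of the Grassmann envelope of an elementarily graded block triangular algebra, so $\mathcal A_i=G(B_i)$ with $B_i$ the graded subalgebra cut out by the prescribed linear relations among the entries.

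Second, I would compute the Wedderburn--Malcev decomposition $B_i=\bar B_i+J_i$. In every case $\bar B_i$ is a sum of at most two simple superalgebras, each isomorphic to $F$ or to $F+cF$, with $\dim_F\bar B_i=3$: for instance $\bar B_3\cong\bar B_4\cong F\oplus F\oplus F$, while for $i=5,\dots ,9$ one gets $\bar B_i\cong (F+cF)\oplus F$, the copy of $F+cF$ being spanned (after the relevant identification) by the idempotent $e_{jj}+e_{kk}$ and the element $e_{jk}+e_{kj}$ of the distinguished $2\times 2$ block. Since no admissible subalgebra can have dimension exceeding $\dim_F\bar B_i$, this yields $\exp(\mathcal A_i)\le 3$; for the reverse inequality it suffices to check that $\bar B_i$ is itself admissible, which is immediate in each case because a single radical element joins consecutive simple components along a path of matrix units (e.g. $(e_{jj}+e_{kk})\,e_{j,j+1}\,(e_{j+1,j+1}+\cdots )\ne 0$). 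Hence $\exp(\mathcal A_i)=\dim_F\bar B_i=3$.

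Since $\exp^\delta(\mathcal A_i)\le\exp(\mathcal A_i)=3$, it remains to prove $\exp^\delta(\mathcal A_i)\ge 3$, i.e. that $\bar B_i$ is \emph{centrally} admissible, and this is the core of the argument, requiring an explicit proper central polynomial. For $i=3,4$ take $f=[x_1,x_2][x_3,x_4]\cdots[x_{2m-1},x_{2m}]$ with $m$ the nilpotency index of $J_i$ minus one ($m=3$ for $\mathcal A_3$, $m=4$ for $\mathcal A_4$): the diagonal of $\mathcal A_i$ is commutative, so every commutator is strictly upper triangular and every value of $f$ lies in $J_i^{\,m}=Fe_{1n}\subseteq Z(\mathcal A_i)$ (the two extreme diagonal entries are either zero or tied together); substituting suitable matrix units $e_{jj}$, $e_{j,j+1}$ into the $x$'s realises the path $1\to 2\to\cdots\to n$, making $f$ equal to $\pm e_{1n}\ne 0$ and using an element of each simple component. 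For $i=6,\dots ,9$ the same scheme works with ordinary commutators replaced by triple commutators $[x_1,x_2,x_3]$ — forced because $G(\bar B_i)$ contains a copy of $G$ and hence satisfies $[x_1,x_2,x_3]=0$ but not $[x_1,x_2]=0$ — once one observes that the $\mathbb Z_2$--degree of any path product from index $1$ to index $j$ equals $h_j-h_1$ (by telescoping the elementary grading) and so depends only on the endpoints, which lands the highest nonzero power of $J_i$ inside the correct graded component $G^{(\varepsilon)}e_{1n}$ of the corner entry $a_{1n}$; since $a_{11}$ and $a_{nn}$ either vanish or are identified, $G^{(\varepsilon)}e_{1n}\subseteq Z(\mathcal A_i)$, and the relations $a_{jj}=a_{kk}$, $a_{jk}=a_{kj}$ are exactly what keeps all values central, while a final matrix substitution shows $f$ is not an identity. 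For $\mathcal A_5$ the argument is subtler: here $Z(\mathcal A_5)$ contains only the $G^{(0)}$--part $G^{(0)}e_{13}$ of the corner entry $a_{13}$, which itself runs over all of $G$, so one must use a product of suitably long commutators and exploit $[G,G]\subseteq G^{(0)}$ together with the hypothesis $a_{22}\in G^{(0)}$ to force every value into $G^{(0)}e_{13}$; the path $1\to 2\to 3$ through both simple components then gives a nonzero value, so $\bar B_5$ is centrally admissible. In all cases $\exp^\delta(\mathcal A_i)=3$.

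The step I expect to be the main obstacle is precisely the construction and verification of the proper central polynomials for $i=5,\dots ,9$: one must ensure that a single multilinear polynomial takes central values under \emph{every} substitution into the Grassmann envelope, which demands careful control of $\mathbb Z_2$--degrees. The difficulty is concentrated in $\mathcal A_5$, whose diagonal is highly noncommutative (its extreme entry ranging over all of $G$), so that an ordinary product of two commutators is not central and the condition $a_{22}\in G^{(0)}$ has to be used essentially; once the correct polynomial is found in each case, the remaining verifications — the nilpotency indices of the $J_i$, the idempotent bookkeeping, the explicit nonzero evaluations — are routine.
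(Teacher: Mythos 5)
Your overall strategy is exactly the paper's: realize each $\mathcal A_i$ as $G(B_i)$, use the characterization of $exp$ and $exp^\delta$ via admissible and centrally admissible subalgebras, get the upper bound $3$ from $\dim_F\bar B_i=3$, and get the lower bound by exhibiting an explicit proper central polynomial with a nonzero evaluation along a path of matrix units meeting every simple component. For $i=3,4$ your polynomials coincide with the paper's, and your degree bookkeeping (telescoping of the elementary grading, so the corner coefficient lies in $G^{(h_1+h_n)}e_{1n}\subseteq Z(\mathcal A_i)$) is sound. For $i=6,\dots ,9$ your recipe ``product of triple commutators, as many as the nilpotency of the radical allows'' does work: triple commutators kill both the commutative corners and the $G(F+cF)$-block (which satisfies $[[x,y],z]=0$), so their values lie in the radical, and the top power of the radical sits in the center; for $\mathcal A_8,\mathcal A_9$ this reproduces the paper's $[x_1,x_2,x_3][x_4,x_5,x_6]$, and for $\mathcal A_6,\mathcal A_7$ it is in fact a safer choice than the mixed products printed in the paper (for instance on $\mathcal A_6$ the substitution $x_1,\dots ,x_4\mapsto g_i(e_{34}+e_{43})$, $x_5\mapsto e_{35}$, $x_6,x_7\mapsto e_{33}+e_{44}$ gives $[x_1,x_2][x_3,x_4][x_5,x_6,x_7]=4g_1g_2g_3g_4\,e_{35}\notin Z(\mathcal A_6)$, so centrality really does force a triple commutator in each factor that can see the $G$-like block).

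The genuine gap is $\mathcal A_5$, exactly the case you flagged: you never produce a polynomial, and the shape you gesture at (a single product of suitably long commutators) cannot work. Indeed any left-normed commutator of length $\ge 3$ takes on $\mathcal A_5$ values with zero diagonal, but its $(1,2)$ and $(2,3)$ entries can be arbitrary elements of $G$; e.g. $[e_{12},e_{22},e_{22}]=e_{12}$ and $[g_1e_{23},e_{22},e_{22}]=g_1e_{23}$ with $g_1\in G^{(1)}$, so a product of two such factors has the value $g_1e_{13}$, which is not central since $[g_1e_{13},\,g_2(e_{11}+e_{33})]=2g_1g_2e_{13}\ne 0$; lengthening the commutators does not control this parity, and a product of three factors is already an identity of $\mathcal A_5$. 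So no plain product of commutators is simultaneously central and proper here. The missing device is an \emph{outer} commutator: the paper takes $[[x_1,x_2,x_3][x_4,x_5,x_6],x_7]$, whose values are $[g,a_{11}]e_{13}\in [G,G]e_{13}\subseteq G^{(0)}e_{13}=Z(\mathcal A_5)\cap Ge_{13}$ (using $a_{11}=a_{33}$), and which is nonzero on $x_1\mapsto e_{12}$, $x_4\mapsto g_1e_{23}$, $x_7\mapsto g_2(e_{11}+e_{33})$, the remaining variables going to $e_{22}$; this evaluation meets both simple components, so $\bar B_5$ is centrally admissible. Without this (or an equivalent trick, necessarily a linear combination rather than a single product of commutators), your argument establishes only $exp^\delta(\mathcal A_5)\le 3$ and $exp(\mathcal A_5)=3$, not $exp^\delta(\mathcal A_5)=3$. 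A very minor additional slip: you say each $\bar B_i$ is a sum of at most two simple superalgebras, while $\bar B_3\cong\bar B_4\cong F\oplus F\oplus F$ has three; this does not affect the dimension count you actually use.
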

	
	\begin{proof}
		For the algebra $\mathcal{A}_3$, since $F(e_{11}+e_{44})Fe_{12}Fe_{22}Fe_{23}Fe_{33}\neq 0$, then $F(e_{11}+e_{44})\oplus Fe_{22}\oplus Fe_{33}$ is a maximal
		admissible subalgebra. Hence  $exp(\mathcal{A}_3)=3$.
		The center of $\mathcal{A}_3$ is $Z(\mathcal{A}_3)=F(e_{11}+e_{22}+e_{33}+e_{44})+Fe_{14}$.
		Since $[e_{11}+e_{44},e_{12}][e_{22},e_{23}][e_{33},e_{34}]=e_{14}$ we have that $[x_1,x_2][x_3,x_4][x_5,x_6]$ is a proper central polynomial of
		$\mathcal{A}_3$ and, so, $F(e_{11}+e_{44})\oplus Fe_{22}\oplus Fe_{33}$
		is a maximal centrally admissible subalgebra. It follows that
		$exp^\delta(\mathcal{A}_3)=3$.

		Regarding the algebra $\mathcal{A}_4$,
		it is easy to see that $Fe_{22}\oplus Fe_{33}\oplus Fe_{44}$ is a maximal admissible subalgebra and, so, $exp(\mathcal{A}_4)=3.$
		The center of $\mathcal{A}_4$ is $Z(\mathcal{A}_4)=Fe_{15}$.  Moreover $[x_1,x_2][x_3,x_4][x_5,x_6] [x_7,x_8]$ is a proper central polynomial of
		$\mathcal{A}_4$ since $[e_{12},e_{22}][e_{23},e_{33}][e_{33},e_{34}][e_{44}, e_{45}]=e_{15}$.
		It follows that $Fe_{22}\oplus Fe_{33}\oplus Fe_{44}$ is a maximal centrally admissible subalgebra and so $exp^\delta(\mathcal{A}_4)=3$.
		
		A maximal admissible subalgebra of $\mathcal A_5$ is
		$(F+cF)(e_{11}+e_{33})\oplus Fe_{22}$, hence $exp(\mathcal A_5)=3.$
		The center of $\mathcal A_5$ is $Z(\mathcal A_5)=G^{(0)}(e_{11}+e_{22}+e_{33})+G^{(0)}e_{13}.$
		Moreover $[[x_1,x_2,x_3][x_4,x_5,x_6],x_7]$ is a proper central polynomial  of $\mathcal A_5$ and, so, $exp^\delta(\mathcal A_5)=3$.
		
		It is easy to check that a maximal admissible subalgebra of  $\mathcal{A}_6$ is
		$Fe_{22}\oplus F(e_{33}+e_{44})\oplus F(e_{34}+e_{43})$.
		Moreover, $Z(\mathcal{A}_6)=G^{(0)}e_{15}$,
		$[x_1,x_2][x_3,x_4][x_5,x_6,x_7]$ is a proper central polynomial and, so, $exp(\mathcal{A}_6)= exp^\delta(\mathcal{A}_6)=3$.

		A maximal admissible subalgebra of the algebra $\mathcal{A}_7$ is
		$F(e_{22}+e_{33})\oplus F(e_{23}+e_{32})\oplus Fe_{44}.$
		Moreover $Z(\mathcal A_{7})=G^{(0)}e_{15}$  and
		$[x_1,x_2,x_3][x_4,x_5][x_6,x_7]$ is a proper central polynomial.
		Thus $ exp(\mathcal{A}_7) =exp^\delta(\mathcal{A}_7)=3$.

		Finally, a maximal admissible subalgebra for both algebras $\mathcal{A}_8$ and $\mathcal{A}_9$ is
		$F(e_{11}+e_{44})\oplus F(e_{22}+e_{33})\oplus F(e_{23}+e_{32})$. Hence $exp(\mathcal{A}_8)=exp(\mathcal{A}_9)=3.$
		The center of $\mathcal{A}_8$ is $Z(\mathcal{A}_8)=G^{(0)}(e_{11}+e_{22}+e_{33}+e_{44})+G^{(0)}e_{14}$ and the center of $\mathcal{A}_9$ is
		$Z(\mathcal{A}_9)=G^{(0)}(e_{11}+e_{22}+e_{33}+e_{44})+G^{(1)}e_{14}$.
		Moreover $[x_1,x_2,x_3][x_4,x_5,x_6]$ is a proper central polynomial for both $\mathcal{A}_8$ and $\mathcal{A}_9$. Hence
		$exp^\delta(\mathcal{A}_8)=exp^\delta(\mathcal{A}_9)=3$.
	\end{proof}
	
	Next we will determine generators for the $T$-ideal of identities of the algebras  $\mathcal{A}_6$ and  $\mathcal{A}_7$.
	To this end we first need to prove a preliminary lemma. Let $\langle f_1,\ldots,f_r\rangle_T$ denote the $T$-ideal of the free algebra generated by the polynomials $f_1,\ldots,f_r.$ We have

	\begin{Lemma} \label{TidealB}
		\hfill
		\begin{itemize}
			\item[1)]
			If
			$
			C_1 = \{ (a_{ij})\in M_3(G) \mid a_{11}=a_{22}, a_{13} \in G^{(0)},  a_{12}=a_{21}, a_{23}\in G^{(1)},  a_{31}=a_{32}=a_{33}=0\}
			$
			then
			$ Id(C_1)=\langle [x_1,x_2,x_3]x_4\rangle_T$.
			\item[2)]
			If
			$
			C_2 = \{ (a_{ij})\in M_3(G) \mid a_{22}=a_{33}, a_{12} \in G^{(0)},  a_{23}=a_{32}, a_{13}\in G^{(1)},
			a_{11}=a_{21}=a_{31}=0\}
			$
			then
			$Id(C_2)=\langle x_1 [x_2,x_3,x_4]\rangle_T$.
		\end{itemize}
	\end{Lemma}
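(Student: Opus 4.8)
The plan is to prove each of (1) and (2) by establishing the two inclusions separately; I describe the argument for (1) in detail, the argument for (2) being symmetric with the roles of left and right interchanged.

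For the inclusion $\langle[x_1,x_2,x_3]x_4\rangle_T\subseteq\I(C_1)$, let $I$ be the subset of $C_1$ consisting of the matrices with $a_{11}=a_{12}=0$, i.e.\ those supported on the third column. Since every matrix of $C_1$ has zero third row, $I$ is a two-sided ideal of $C_1$ with $I\,C_1=0$; moreover the quotient $C_1/I$ is determined by the pair $(a_{11},a_{12})$, and a direct computation shows that it is isomorphic to $G$, so $\I(C_1/I)=\I(G)=\langle[x_1,x_2,x_3]\rangle_T$ (it is classical that $\I(G)$ is generated by the triple commutator). Hence for all $a_1,a_2,a_3\in C_1$ one has $[a_1,a_2,a_3]\in I$, and therefore $[a_1,a_2,a_3]\,a_4\in I\,C_1=0$ for every $a_4\in C_1$; thus $[x_1,x_2,x_3]x_4\in\I(C_1)$, and since $\I(C_1)$ is a $T$-ideal the desired inclusion follows.

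For the reverse inclusion, set $Q=\langle[x_1,x_2,x_3]x_4\rangle_T$ and $L=\langle[x_1,x_2,x_3]\rangle_T=\I(G)$. Projecting onto $C_1/I$ gives $\I(C_1)\subseteq\I(C_1/I)=L$, so $Q\subseteq\I(C_1)\subseteq L$, and since $F$ has characteristic zero it suffices to prove that $P_n\cap\I(C_1)=P_n\cap Q$ for every $n$. The first step is to exhibit a spanning set of $P_n$ modulo $Q$. Working modulo $Q$ one has two reductions available: any product in which a triple commutator is followed by a nonempty factor is zero (so every iterated commutator of length at least $4$ collapses, e.g.\ $[x_1,x_2,x_3,x_4]\equiv -x_4[x_1,x_2,x_3]$), and, since commutator pairs are central modulo $L$, the usual Grassmann normal form is available outside $L$. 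One concludes that $P_n$ is spanned modulo $Q$ by the $2^{n-1}$ standard monomials $x_{i_1}\cdots x_{i_p}[x_{j_1},x_{j_2}]\cdots[x_{j_{2q-1}},x_{j_{2q}}]$ (with $i_1<\cdots<i_p$, $j_1<\cdots<j_{2q}$ and $p+2q=n$), together with a finite family $\mathcal B_n\subseteq P_n\cap L$ whose members have the form $w\cdot[x_{j_1},x_{j_2},x_{j_3}]$, where the triple commutator is the rightmost factor and is one of the two independent left-normed triple commutators on its three variables, and $w$ is a suitably normalized product of the remaining variables and of commutator pairs in them. In particular $\dim P_n/(P_n\cap Q)\le 2^{n-1}+|\mathcal B_n|$.

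The last step is to verify that these $2^{n-1}+|\mathcal B_n|$ polynomials are linearly independent modulo $\I(C_1)$: once this is done, $c_n(C_1)=\dim P_n/(P_n\cap\I(C_1))\ge 2^{n-1}+|\mathcal B_n|\ge\dim P_n/(P_n\cap Q)\ge c_n(C_1)$, so equality holds throughout and $P_n\cap Q=P_n\cap\I(C_1)$ for all $n$, whence $\I(C_1)=Q$. The standard monomials are automatically independent because the block $a_{13}=a_{23}=0$ of $C_1$ is a copy of $G$, so $\I(C_1)\subseteq\I(G)$; for the elements of $\mathcal B_n$ one evaluates on matrices of $C_1$ assembled from $e_{11}+e_{22}$, $e_{12}+e_{21}$, $e_{13}$, $e_{23}$ with algebraically independent Grassmann coefficients and examines the third-column component of the value, which, because $[C_1,C_1,C_1]\subseteq I$ and $I$ is a faithful left $C_1/I$-module, reduces to a direct linear independence check inside $G$. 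For (2) one argues symmetrically, taking $J$ to be the two-sided ideal of $C_2$ consisting of the matrices supported on the first row: then $C_2\,J=0$ and $C_2/J$ again satisfies $\I(G)$, which gives $x_1[x_2,x_3,x_4]\in\I(C_2)$ and $\I(C_2)\subseteq L$, and the reverse inclusion is obtained exactly as above with the triple commutator now occurring as the leftmost factor (one may also invoke the reversal anti-automorphism of $F\langle X\rangle$, which fixes $[x_1,x_2,x_3]$ and carries $[x_1,x_2,x_3]x_4$ to $x_4[x_1,x_2,x_3]$). The main obstacle is the second half of the argument for (1): pinning down the precise canonical form of the elements of $\mathcal B_n$ and verifying their linear independence under evaluation in $C_1$ — this is where the bulk of the technical work lies.
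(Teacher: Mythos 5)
Your first half is fine, and in fact a bit more structural than the paper's: identifying the ideal $I$ of third-column matrices with $IC_1=0$ and $C_1/I\cong G$ does verify $[x_1,x_2,x_3]x_4\in \I(C_1)$ (the paper simply states this inclusion is easily checked), and the observation $\I(C_1)\subseteq \I(G)$ via the diagonal block is correct. The overall strategy for the reverse inclusion — exhibit a spanning set of $P_n$ modulo $Q=\langle[x_1,x_2,x_3]x_4\rangle_T$ and show it stays independent modulo $\I(C_1)$ — is also a legitimate route, logically equivalent to what the paper does.

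But there is a genuine gap: the entire substance of the lemma lies in the step you explicitly leave open. You never define the family $\mathcal B_n$ (``a suitably normalized product'' is not a canonical form), and you never carry out the evaluation argument showing that no nontrivial linear combination of your spanning elements is an identity of $C_1$; your closing sentence concedes that ``this is where the bulk of the technical work lies.'' Without that, nothing prevents $\I(C_1)$ from being strictly larger than $Q$, which is exactly what must be excluded. The paper fills this hole concretely: working modulo $[x_1,x_2,x_3]x_4$ it derives the relations $[x_1,x_2]x_3x_4\equiv x_3[x_1,x_2]x_4$ and $[x_1,x_2][x_3,x_4]x_5+[x_1,x_3][x_2,x_4]x_5\equiv 0$, which give a normal form different from yours, namely $x_{i_1}\cdots x_{i_k}[x_{j_1},x_{j_2}]\cdots[x_{j_{2m-1}},x_{j_{2m}}]\,x_a$ with ordered indices and a distinguished \emph{last} variable $x_a$ (rather than Grassmann monomials plus a residual family in $\I(G)$); it then takes a multilinear $f\in\I(C_1)$, picks the minimal $m$ with a nonzero coefficient, and substitutes $x_{i_t}\mapsto e_{11}+e_{22}$, $x_{j_t}\mapsto g_t(e_{12}+e_{21})$ with distinct odd generators $g_t$, and $x_a\mapsto g e_{13}$ with $g\in G^{(0)}$, $gg_t\neq 0$, to isolate a single coefficient and reach a contradiction. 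Some version of this explicit normal form plus evaluation (and its mirror image for $C_2$, where your reversal anti-automorphism remark is a reasonable shortcut) is what your proposal still needs before it counts as a proof.
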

	
	\begin{proof}
		It is easily checked that $C_1$ satisfies the identity $[x_1,x_2,x_3]x_4\equiv 0$.
		Hence Id$(C_1)\supseteq \langle [x_1,x_2,x_3]x_4\rangle_T$.
		
		Next we shall work modulo the identity $[x_1,x_2,x_3]x_4\equiv 0$.
		First we claim that
		\begin{equation}\label{commutatore}
			[x_1,x_2]x_3x_4\equiv x_3 [x_1,x_2]x_4.
		\end{equation}
		Indeed
		$$
		[x_1,x_2]x_3x_4=([[x_1,x_2], x_3]+ x_3 [x_1,x_2])x_4= [x_1,x_2,x_3]x_4+ x_3 [x_1,x_2]x_4\equiv x_3 [x_1,x_2]x_4.
		$$
		
		\noindent
		Moreover
		from the equality
		$$
		0\equiv[x_1,x_2x_3,x_4]x_5=[[x_1,x_2]x_3+x_2[x_1,x_3], x_4]x_5=
		$$
		$$
		([x_1,x_2,x_4]x_3+[x_1,x_2][x_3,x_4]+[x_2,x_4][x_1,x_3]+x_2[x_1,x_3,x_4])x_5
		$$
		we get
		$$
		[x_1,x_2][x_3,x_4]x_5+[x_2,x_4][x_1,x_3]x_5\equiv 0.
		$$
		Since by (\ref{commutatore}),  $[x_2,x_4][x_1,x_3]x_5\equiv [x_1,x_3][x_2,x_4]x_5$, we obtain
		\begin{equation}\label{commutatore2}
			[x_1,x_2][x_3,x_4]x_5+[x_1,x_3][x_2,x_4]x_5\equiv 0.
		\end{equation}
		From the relations (\ref{commutatore}) and (\ref{commutatore2}) it follows that any monomial
		$$
		w=x_{u_1}\cdots x_{u_{n-1}}x_a \in P_n
		$$
		can be written modulo the identity $[x_1,x_2,x_3]x_4$ as a linear combination of products of the type
		$$
		x_{i_1}\cdots x_{i_{k}}[x_{j_1},x_{j_2}]\cdots [x_{j_{2m-1}},x_{j_{2m}}] x_a
		$$
		with $i_1<\cdots < i_k$, $j_1<\cdots < j_{2m}$, $k+2m=n-1$.
		
		Let $f(x_1, \ldots , x_n)$ be a multilinear polynomial identity of $C_1$, we shall prove that it vanishes modulo the identity $[x_1,x_2,x_3]x_4$.
		Suppose that $f(x_1, \ldots , x_n)\ne 0$ modulo $[x_1,x_2,x_3]x_4$ and write
		$$
		f(x_1, \ldots , x_n)\equiv\sum \beta_{\underline i,\underline j, a}x_{i_1}\cdots x_{i_{n-2m-1}}[x_{j_1},x_{j_2}]\cdots [x_{j_{2m-1}},x_{j_{2m}}] x_a,
		$$
		where $\beta_{\underline i,\underline j, a} \in F$, $\underline i =(i_1, \ldots, i_{n-2m-1})$,
		$i_1<\cdots < i_{n-2m-1}$, $\underline j=(j_1, \ldots,  j_{2m})$ and
		$j_1<\cdots < j_{2m}$.
		We choose the minimal $m$ such that for some  $(n-2m-1, 2m)$-tuple $(i_1,\ldots
		,i_{n-2m-1}; j_1, \ldots, j_{2m})$ we have that $\beta_{\underline i,\underline j, a} \neq 0$.
		We make the substitution $x_{i_1}=\cdots = x_{i_{n-2m-1}}= e_{11}+e_{22}$, $x_{j_t}=g_t(e_{12}+e_{21})$ for all $t=1, \ldots ,  2m$,  $x_a= g e_{13}$, where
		$g \in G^{(0)}$,  the $g_t$'s are distinct generators of $G^{(1)}$   and $gg_t\neq 0$.
		Clearly we obtain, for a suitable coefficient $\beta_{\underline i,\underline j, a} \in F$,
		$$
		\beta_{\underline i,\underline j, a} g_1\cdots g_{2m}ge_{13}\neq 0.
		$$
		
		\noindent
		The minimality of $m$ gives that all other coefficients vanish and we get a contradiction.
		
		Similar considerations show that Id$(C_2)=\langle x_1 [x_2,x_3,x_4]\rangle_T$.
	\end{proof}
	
	\bigskip
	
	Recall that the free supercommutative algebra $S = F[U, V]$ is the algebra with $1$ generated by two countable sets
	$U =\{u_1, u_2 , \ldots\}$ and $V =\{v_1, v_2 , \ldots\}$ over $F$, subject to the condition that the elements of $U$ are central and the elements of $V$
	anticommute among them.
	The algebra $S$ has a natural $\mathbb Z_2$-grading  $S=S^{(0)} \oplus S^{(1)}$ if we require the variables of $U$ to be even and those of $V$ to be odd. We shall
	call the elements of $V$ Grassmann variables.
	
	If $B=B^{(0)}\oplus B^{(1)}$ is a superalgebra, the algebra $S(B) = (S^{(0)} \otimes B^{(0)}) + (S^{(1)} \otimes B^{(1)})$ is called  the superenvelope of $B.$
	It has the following basic property (see \cite[Proposition 3.8.5]{GZbook}).
	
	\begin{Proposition} \label{supercomm}
		Let $B=B^{(0)}\oplus B^{(1)}$ be a finite-dimensional superalgebra.
		If $\{a_1, \ldots, a_k \} $ and $ \{b_1, \ldots, b_t \} $ are basis of $B^{(0)}$ and $B^{(1)}$ respectively, then the subalgebra of $S(B)$ generated by the
		elements
		$$
		\xi_i = u_{i_1} \otimes a_1 +\cdots + u_{i_k}  \otimes a_k+ v_{i_1}  \otimes b_1 +\cdots + v_{i_t} \otimes b_t, \,\, i= 1,2, \ldots
		$$
		is a relatively free algebra of the variety $var(G(B))=var(S(B))$ with free generators $\xi_1$, $\xi_2, \ldots.$
	\end{Proposition}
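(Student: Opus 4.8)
The plan is to identify $R:=\langle\xi_1,\xi_2,\dots\rangle\subseteq S(B)$ with the relatively free algebra $F\langle X\rangle/\I(S(B))$ of countable rank. The assignment $x_i\mapsto\xi_i$ defines an algebra homomorphism $\rho\colon F\langle X\rangle\to S(B)$ with image $R$, and since each $\xi_i$ lies in $S(B)$ we have $\I(S(B))\subseteq\ker\rho$, so $\rho$ factors as a surjection $\bar\rho\colon F\langle X\rangle/\I(S(B))\twoheadrightarrow R$. Everything reduces to the reverse inclusion $\ker\rho\subseteq\I(S(B))$, i.e.\ to showing that $f(\xi_1,\dots,\xi_n)=0$ forces $f\in\I(S(B))$. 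First I would reduce to multilinear $f$: since $\operatorname{char}F=0$ it is enough, by the usual linearization--restitution argument, to treat multilinear $f$, the point being that $\ker\rho$ is closed under the operations involved because the substitutions sending a generic generator $\xi_i$ to a sum of distinct $\xi_j$'s — the ones used in polarization — are induced by $B$-linear, parity-preserving automorphisms of $S(B)$.

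So let $f=\sum_\sigma\alpha_\sigma x_{\sigma(1)}\cdots x_{\sigma(n)}$ be multilinear of degree $n$, with $\sigma$ ranging over the permutations of $\{1,\dots,n\}$ and $\alpha_\sigma\in F$. Expanding each $\xi_i$ into its $k+t$ summands, $f(\xi_1,\dots,\xi_n)$ becomes a sum over all functions $\nu$ that assign to each $i\in\{1,\dots,n\}$ a basis element $c^\nu_i\in\{a_1,\dots,a_k,b_1,\dots,b_t\}$ together with the variable $s^\nu_i\in\{u_{i_1},\dots,u_{i_k},v_{i_1},\dots,v_{i_t}\}$ carried by the chosen summand of $\xi_i$. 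For fixed $\nu$ one pushes the central $u$'s aside and reorders the remaining $v$'s into canonical order, using that all of $s^\nu_1,\dots,s^\nu_n$ are distinct; the $\nu$-summand collapses to
$$
\bigl(s^\nu_1 s^\nu_2\cdots s^\nu_n\bigr)\otimes f^{O_\nu}\bigl(c^\nu_1,\dots,c^\nu_n\bigr),
$$
where $O_\nu=\{\,i: c^\nu_i\in\{b_1,\dots,b_t\}\,\}$ and $f^{O}$ denotes $f$ with each monomial $x_{\sigma(1)}\cdots x_{\sigma(n)}$ weighted by the sign of the permutation of $O$ obtained by listing the indices $\sigma(m)\in O$ in order of occurrence (the super-sign rule). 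The decisive remark is that the monomials $s^\nu_1\cdots s^\nu_n$ are pairwise distinct elements of the standard monomial basis of $S$, hence $F$-linearly independent; therefore $f(\xi_1,\dots,\xi_n)=0$ holds if and only if $f^{O_\nu}(c^\nu_1,\dots,c^\nu_n)=0$ in $B$ for \emph{every} choice of $\nu$.

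Finally I would interpret this system of conditions. As $\nu$ varies, the data $\bigl(O_\nu,(c^\nu_i)_i\bigr)$ runs over all pairs consisting of a parity pattern $O\subseteq\{1,\dots,n\}$ and basis elements $c_i\in B^{(0)}$ for $i\notin O$, $c_i\in B^{(1)}$ for $i\in O$; since $\{a_j\}\cup\{b_l\}$ is a basis of $B$, multilinearity shows that the conditions $f^{O_\nu}(c^\nu)=0$ for all $\nu$ say precisely that $f$ vanishes under every homogeneous substitution in $G(B)$ — equivalently in $S(B)$ — which is exactly the condition defining $\I(G(B))$ (and $\I(S(B))$). Hence $\ker\rho=\I(S(B))=\I(G(B))$; in particular $var(G(B))=var(S(B))$, and $\bar\rho$ is an isomorphism, so $R$ is relatively free in $var(S(B))$ with free generators $\xi_1,\xi_2,\dots$. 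The step I expect to be most delicate is the sign bookkeeping leading to the displayed formula — verifying that sliding the central $u$'s out and permuting the odd $v$'s into canonical position reproduces exactly the super-sign rule that defines $f^{O}$; granted that, the linear independence of the $s^\nu_1\cdots s^\nu_n$ and the translation into homogeneous substitutions in $B$ are routine.
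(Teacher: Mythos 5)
The paper itself offers no proof of this proposition: it is quoted from \cite[Proposition 3.8.5]{GZbook}, so there is no in-paper argument to compare against. Your argument is correct and is essentially the standard proof of that result: reduce to multilinear $f$, expand $f(\xi_1,\dots,\xi_n)$ over the choices of basis summands, use that the resulting monomials in the supercommutative variables are distinct (hence independent) to conclude $f(\xi)=0$ iff all the sign-twisted evaluations $f^{O_\nu}(c^\nu_1,\dots,c^\nu_n)$ vanish in $B$, and translate this back into $f\in\I(G(B))=\I(S(B))$. Two small points. First, the polarization substitutions $\xi_i\mapsto\xi_j+\xi_k$ are induced by parity-preserving \emph{endomorphisms} of $S$ (hence of $S(B)$), not automorphisms (e.g.\ $u_{i_m}\mapsto u_{j_m}+u_{k_m}$ is not surjective); only the endomorphism property is needed, so this is a wording slip. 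Second, your final sentence (``the conditions $f^{O_\nu}(c^\nu)=0$ for all $\nu$ say precisely that $f$ vanishes under every homogeneous substitution in $G(B)$'') silently reuses the same sign computation inside $G$: one should note that evaluating the multilinear $f$ at $g_1\otimes c_1,\dots,g_n\otimes c_n$ with matching parities yields $\pm\, g_1\cdots g_n\otimes f^{O}(c_1,\dots,c_n)$, with $g_1\cdots g_n\neq 0$ for suitable $g_i$; once this is said, multilinearity gives both $f\in\I(G(B))$ and the equality $\I(G(B))=\I(S(B))$. It is also worth observing that for the inclusion $\ker\rho\subseteq\I(S(B))$ you could bypass multilinearization entirely: every element of $S(B)$ has the form $\sum_j u'_j\otimes a_j+\sum_l v'_l\otimes b_l$ with $u'_j\in S^{(0)}$, $v'_l\in S^{(1)}$, and the assignment $u_{i_j}\mapsto u'_j$, $v_{i_l}\mapsto v'_l$ extends to a parity-preserving endomorphism of $S$ carrying $\xi_i$ to that element, so vanishing at the generic points already forces vanishing on all of $S(B)$; the multilinear sign bookkeeping is then needed only to identify $\I(S(B))$ with $\I(G(B))$.
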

	
	Now, we can prove the following
	\begin{Proposition} \label{78}
		\hfill
		\begin{itemize}
			\item[1)] Id$(\mathcal{A}_6)=\langle x_1[x_2,x_3][x_4,x_5,x_6]x_7\rangle
			_T$.
			\item[2)] Id$(\mathcal{A}_7)=\langle x_1[x_2,x_3,x_4][x_5,x_6]x_7\rangle
			_T$.
		\end{itemize}
	\end{Proposition}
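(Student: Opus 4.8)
The plan is to prove each identity in two directions.  For the inclusion $\langle x_1[x_2,x_3][x_4,x_5,x_6]x_7\rangle_T \subseteq \mathrm{Id}(\mathcal A_6)$ (and similarly for $\mathcal A_7$) I would simply verify directly that the generating polynomial vanishes on the algebra.  Using the description of $\mathcal A_6$ as a subalgebra of $G(UT(2,3)_{(0,0,0,1,0)})$ with the stated vanishing and symmetry conditions on the entries, one observes that a product $x_1 y z w x_7$, where $y$ and $z$ are commutators and $w$ is a left-normed commutator of length three, can only be nonzero if it lands in the single ``corner'' $e_{15}$; the commutators $[x_4,x_5,x_6]$ force one to move strictly through the radical from the $2\times 2$ block to the $3\times 3$ block while simultaneously annihilating the semisimple part (here is where the symmetric $2\times2$ block $a_{33}=a_{44}$, $a_{34}=a_{43}$, which is commutative, is used), and the bracketing then forces too many radical jumps to fit in $UT(2,3)$.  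This is a short block-matrix computation.

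For the reverse inclusion the plan is to apply Proposition~\ref{supercomm}: working in the relatively free algebra of $\mathrm{var}(\mathcal A_6)=\mathrm{var}(G(B_6))$, where $B_6$ is the superalgebra underlying $\mathcal A_6$, a generic element $\xi_i$ is a supercommutative-coefficient combination of a fixed basis of $B_6^{(0)}\oplus B_6^{(1)}$.  I would then reduce an arbitrary multilinear polynomial $f\in\mathrm{Id}(\mathcal A_6)$ modulo the $T$-ideal $\langle x_1[x_2,x_3][x_4,x_5,x_6]x_7\rangle_T$ to a controlled normal form and evaluate.  The reduction should use Lemma~\ref{TidealB}: the algebra $C_1$ (resp. $C_2$) is precisely engineered so that, modulo $x_1[x_2,x_3,x_4]$ on one side, the only surviving consequences are the normal-form monomials with at most one left-normed triple commutator and the rest split into transpositions of pairs of ordinary commutators.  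Concretely, modulo $x_1[x_2,x_3][x_4,x_5,x_6]x_7$ one still has the freedom to slide things as in~\eqref{commutatore} and~\eqref{commutatore2}, and I expect that every multilinear monomial can be written as a linear combination of words of the shape
\[
x_{i_1}\cdots x_{i_p}\,[x_{j_1},x_{j_2}]\cdots[x_{j_{2s-1}},x_{j_{2s}}]\,[x_{k_1},x_{k_2},x_{k_3}]\,x_{l_1}\cdots x_{l_q}
\]
(with the triple commutator optional and the pair-commutators and the two ``wings'' internally ordered), because two triple commutators or a triple commutator not bracketed as the innermost factor produce a consequence of the generator.

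The crux is the evaluation step establishing that any nonzero normal-form combination is actually not an identity of $\mathcal A_6$, i.e. that no nontrivial relation among these normal-form words holds.  The idea is the one already used in Lemma~\ref{TidealB}: pick the term with the extremal structure (say maximal $s$, then a fixed index pattern) and specialize the variables to the matrix units of $\mathcal A_6$ tensored with appropriate even or odd Grassmann generators so that exactly that term survives with a nonzero coefficient $\beta\,g_{(\cdot)}\,e_{15}$ and all competing terms vanish, either because they require the wrong parity of Grassmann element or because they demand one more radical step than $UT(2,3)$ allows.  The symmetric commutative $2\times2$ block in $\mathcal A_6$ is what lets a triple commutator, but not two of them, be absorbed; the two single factors $x_1$ and $x_7$ at the ends correspond to the two extremal $1$-dimensional semisimple pieces $e_{11}$ and $e_{55}$ being forced to vanish, which is why the words have one ordered ``wing'' on each side.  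I expect the main obstacle to be bookkeeping: making the normal-form reduction genuinely exhaustive (no overlooked monomial shapes) and checking that the specialization simultaneously kills \emph{all} the non-extremal normal-form words — this requires a careful case analysis on where the triple commutator sits and on how the paired commutators distribute.  The argument for $\mathcal A_7$ is entirely parallel, with the roles of the left and right sides interchanged, using part 2) of Lemma~\ref{TidealB} in place of part 1).
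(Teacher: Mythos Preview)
Your approach differs substantially from the paper's, and the reverse-inclusion argument has a genuine gap.

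The paper does \emph{not} attempt a direct normal-form reduction modulo $\langle x_1[x_2,x_3][x_4,x_5,x_6]x_7\rangle_T$.  Instead it decomposes $\mathcal{A}_6$ as a $2\times 2$ block upper-triangular algebra with diagonal blocks $B_1$ and $B_2$ (isomorphic, up to inessential changes, to the algebras whose $T$-ideals are computed in Lemma~\ref{TidealB}) and an off-diagonal bimodule $J$.  Using Proposition~\ref{supercomm} it builds generic matrices $Z^i=X^i+Y^i+U^i$ and shows that the $U^i$ generate a \emph{free} $(\tilde B_1,\tilde B_2)$-bimodule; Lewin's theorem then gives $\mathrm{Id}(\mathcal{A}_6)=\mathrm{Id}(B_1)\,\mathrm{Id}(B_2)=\langle x_1[x_2,x_3]\rangle_T\,\langle [x_4,x_5,x_6]x_7\rangle_T$, which equals the stated $T$-ideal.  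Lemma~\ref{TidealB} is used only to identify the factors, not to reduce inside $\mathcal{A}_6$.

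The gap in your plan is the claimed normal-form step.  The relations \eqref{commutatore} and \eqref{commutatore2} are consequences of the identity $[x_1,x_2,x_3]x_4\equiv 0$; they do \emph{not} follow from the much weaker identity $x_1[x_2,x_3][x_4,x_5,x_6]x_7\equiv 0$.  Modulo the latter $T$-ideal you cannot in general slide a commutator past a variable, nor antisymmetrize pairs of commutators, so the reduction of an arbitrary multilinear polynomial to your proposed shape is unjustified (and in fact fails: $\mathcal{A}_6$ does not satisfy $[x_1,x_2]x_3x_4\equiv x_3[x_1,x_2]x_4$).  A correct normal form modulo this $T$-ideal would essentially have to encode, simultaneously, a basis of $P_n/\mathrm{Id}(B_1)$ on the left and of $P_n/\mathrm{Id}(B_2)$ on the right together with a choice of split point --- precisely the complexity that the Lewin factorization absorbs.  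Your specialization idea for the final step is sound in spirit, but without a valid normal form there is nothing to specialize against.  (Minor point: in your sketch of the easy inclusion you describe the generator as $x_1yzwx_7$ with $y,z$ commutators and $w$ a triple commutator, but the polynomial has only one simple commutator and one triple commutator.)
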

	
	\begin{proof}
		First we consider the algebra $\mathcal{A}_6$.
		We notice that $\mathcal{A}_6=G(B)$ where $B=B^{(0)}\oplus B^{(1)}$ is a finite-dimensional superalgebra with dim$_FB^{(0)}=8$ and dim$_FB^{(1)}=4$.
		We write
		
		$$
		\mathcal{A}_6=\left(
		\begin{array}{cc}
			B_1 & J \\
			0 & B_2 \\
		\end{array}
		\right)
		$$
		
		\medskip
		\noindent
		where
		$B_1=\left(
		\begin{array}{cc}
			0 & G^{(0)} \\
			0 & G^{(0)}  \\
		\end{array}
		\right)$,
		$B_2=\left\{\left(
		\begin{array}{ccc}
			g_1 & h_1 & G^{(0)} \\
			h_1 & g_1 & G^{(1)} \\
			0 & 0  & 0 \\
		\end{array}
		\right) \mid g_1 \in G^{(0)}, h_1\in G^{(1)} \right\}$  and
		$J=\left(
		\begin{array}{ccc}
			G^{(0)} & G^{(1)} & G^{(0)} \\
			G^{(0)} & G^{(1)} & G^{(0)} \\
		\end{array}
		\right)$.

		\medskip
		
		We shall prove that Id$(\mathcal{A}_6)=$Id$(B_1)$Id$(B_2)$ by making use of Proposition \ref{supercomm}.
		
		We choose $8n$ polynomial variables $\xi^i_{j}$, $1\le i\le n$, $i\le j\le 8$, and $4n$ Grassmann variables  $\eta^i_{l}$, $1\le i\le n$, $1\le l\le 4$.
		Denote by $\mathcal{G}$ the subalgebra generated by the elements
		$$
		Z^i=\sum_{j=1}^8 \xi^i_{j} a_j + \sum_{l=1}^4 \eta^i_{l} b_l \in  F[\xi^i_{j}, \eta^i_{l}]\otimes B,
		$$
		where $a_1, \ldots , a_8$ are a basis of $B^{(0)}$ and $b_1, \ldots , b_4$ a basis of $B^{(1)}$.
		Since we choose a basis of $B$ made of matrix units, we shall use a double index for the commutative and Grassmann variables.
		
		Consider three infinite disjoint sets of commutative and Grassmann variables and denote them as follows:
		$$
		X=\{\xi^i_{1,2}, \xi^i_{2,2}\}, \,\, Y=\{\xi^i_{3,3}, \xi^i_{3,5}, \eta^i_{3,4}, \eta^i_{4,5}\}
		$$
		and
		$$
		U=\{\xi^i_{1,3}, \xi^i_{1,5}, \xi^i_{2,3}, \xi^i_{2,5}, \eta^i_{1,4}, \eta^i_{2,4}\}.
		$$
		For $i=1, \ldots , n$,  consider the following matrices:
		$$
		X^i=\xi^i_{1,2}e_{12}+ \xi^i_{2,2}e_{22},
		$$
		$$
		Y^i=\xi^i_{3,3}(e_{33}+e_{44})+ \xi^i_{3,5}e_{35}+\eta^i_{3,4}(e_{34}+e_{43})+\eta^i_{4,5}e_{45},
		$$
		$$
		U^i=\xi^i_{1,3}e_{13}+ \xi^i_{1,5}e_{15}+\xi^i_{2,3}e_{23}+\xi^i_{2,5}e_{25}+ \eta^i_{1,4}e_{14}
		+ \eta^i_{2,4}e_{24}.
		$$
		Then, by Proposition \ref{supercomm}, the generic matrices $X^1, \ldots , X^n$ generate the relatively free algebra $\Tilde{B}_1$ of rank $n$ of
		the variety $var(B_1)$ and $Y^1, \ldots , Y^n$ generate the relatively free algebra $\Tilde{B}_2$ of rank $n$ of
		the variety $var(B_2)$.
		
		If we prove that $U^1, \ldots , U^n$ generate a free
		$(\Tilde{B}_1, \Tilde{B}_2$)-bimodule then, by Lewin theorem \cite[Corollary 1.8.2]{GZbook},
		we obtain that the set  $\{ Z^i=X^i+ Y^i+ U^i \mid 1\le i\le n\}$ generates a relatively free
		algebra of the variety corresponding to  the product of the $T$-ideals Id$(B_1)$Id$(B_2)$.
		This will complete the proof of the proposition.
		
		Hence we need to show that $U^1, \ldots , U^n$ generate a free
		$(\Tilde{B}_1, \Tilde{B}_2)$-bimodule.
		Let $a^j_1, a^j_2, \ldots $, $1\le j\le n$,  be  non-zero elements  of $\Tilde{B}_1$
		and let  $b_1,b_2, \ldots $  be linearly independent elements of $\Tilde{B}_2$.
		We want to prove that
		$$
		\sum_ia^1_iU^1b_i+\cdots +a^n_iU^nb_i\neq 0.
		$$
		Since $U^1, \ldots , U^n$ depend on disjoint sets of variables it is enough to check that
		$$
		a^1_1U^1b_1+\cdots +a^1_tU^1b_t\neq 0,
		$$
		for all $t\geq 1$.
		Since $a_1^1\neq 0$, it has a non zero value in $B_1$ under some specialization of the variables in $X$. This value is of the type
		$\lambda_{1,2}^1e_{12}+\lambda_{2,2}^1e_{22}\neq 0,$
		with $\lambda_{1,2}^1, \lambda_{2,2}^1 \in G^{(0)}$.
		Suppose now that
		$
		a_1^1U^1b_1+\cdots +a_t^1U^1b_t= 0.
		$
		Then
		$$
		\sum_{i=1}^t a_i^1(\xi^i_{2,3}e_{23}+ \xi^i_{2,5}e_{25}+ \eta^i_{2,4}e_{24}) b_i=0
		$$
		and, so,
		$$
		\sum_{i=1}^t (\lambda_{1,2}^ie_{12}+\lambda_{2,2}^ie_{22})(\xi^i_{2,3}e_{23}+ \xi^i_{2,5}e_{25}+ \eta^i_{2,4}e_{24}) b_i =0.
		$$
		Assume for instance that $\lambda_{1,2}^1\neq 0$.
		If we consider the following specialization
		$
		\xi^i_{2,3}\rightarrow 1, \,\, \xi^i_{2,5}\rightarrow 0, \,\, \eta^i_{2,4}\rightarrow 0,
		$
		then
		$$
		\sum_{i=1}^t (\lambda_{1,2}^ie_{13}+\lambda_{2,2}^ie_{23})b_i =0
		$$
		and in particular
		$$
		e_{13}(\sum_{i=1}^t \lambda_{1,2}^i b_i) =0.
		$$
		Let $f=\sum_{i=1}^t \lambda_{1,2}^i b_i.$ This element is not an identity for $B_2$ but for any specialization of the variables in $Y$,  i.e. for any
		homomorphism $\phi: \Tilde{B}_2\rightarrow B_2$, we have $e_{13}\phi(f)=0.$ This means that in $\phi(f)$
		all elements of the first row are zero. By  \cite[Lemma 4]{GZ2003AM}, the linear span $\langle\phi(f) \rangle$ of all values of $f$ on $B_2$ is a Lie ideal of
		$B_2$ and this is false.
		The contradiction just obtained proves that Id$(\mathcal{A}_6)=$Id$(B_1)$Id$(B_2)$. Since, by Lemma \ref{TidealB}, Id$(B_1)=\langle
		x_1[x_2,x_3]\rangle_T$ and Id$(B_2)=\langle[x_4, x_5,x_6]x_7\rangle_T$,
		we obtain that Id$(\mathcal{A}_6)=\langle
		x_1[x_2,x_3][x_4, x_5,x_6]x_7\rangle_T$.

		The proof of the second part of the proposition is very similar to the above proof and we omit it.
	\end{proof}

	\begin{Remark} \label{rem1}
		Consider the following algebras
		\begin{itemize}
			\item
			$\mathcal{A}_6^1=
			\{ (a_{ij}) \in G(UT(2,3)_{(0,0,0,1,1)}) \mid a_{11}=a_{21}=a_{53}=a_{54}=a_{55}=0,\ a_{33}= a_{44}, \ a_{34}=a_{43} \}$,
			\medskip
			
			\item
			$\mathcal{A}_6^2= \{ (a_{ij}) \in G(UT(2,3)_{(0,1,1,0,1)})\mid  a_{11}=a_{21}=a_{53}=a_{54}=a_{55}=0,\ a_{33}=a_{44}, \ a_{34}=a_{43} \}$,
			\medskip
			\item
			$\mathcal{A}_6^3= \{(a_{ij})\in G(UT(2,3)_{(0,1,1,0,0)})\mid  a_{11}=a_{21}=a_{53}=a_{54}=a_{55}=0, \ a_{33}=a_{44},\
			a_{34}=a_{43}
			\}$,
			
			\medskip
			
			\item
			$\mathcal{A}_7^1= \{ (a_{ij}) \in G(UT(3,2)_{(0,1,0,1,1)}) \mid a_{11}=a_{21}=a_{31}=a_{54}=a_{55}=0 ,\  a_{22}=a_{33},\
			a_{23}=a_{32} \}$,
			\medskip
			
			\item
			
			$\mathcal{A}_7^2= \{ (a_{ij}) \in G(UT(3,2)_{(0,0,1,0,1)}) \mid a_{11}=a_{21}=a_{31}=a_{54}=a_{55}=0,\ a_{22}=a_{33},\
			a_{23}=a_{32}
			\} $,
			\medskip
			
			\item
			$\mathcal{A}_7^3= \{(a_{ij}) \in G(UT(3,2)_{(0,1,0,1,0)}) \mid a_{11}=a_{21}=a_{31}=a_{54}=a_{55}=0,\ a_{22}=a_{33},\
			a_{23}=a_{32}\} $.
			\medskip
		\end{itemize}
		By  Proposition \ref{78} and its proof it follows that
		$$
		Id(\mathcal{A}_6^1)=Id(\mathcal{A}_6^2)=Id(\mathcal{A}_6^3)=Id(\mathcal{A}_6)=\langle x_1[x_2,x_3][x_4,x_5,x_6]x_7\rangle_T
		$$
		and
		$$
		Id(\mathcal{A}_7^1)=Id(\mathcal{A}_7^2)=Id(\mathcal{A}_7^3)=Id(\mathcal{A}_7)= \langle x_1[x_2,x_3,x_4][x_5,x_6]x_7\rangle_T.
		$$
	\end{Remark}

	\bigskip
	
	\section{Building up algebras of small $\delta$-exponent}

	Throughout this section  $B=B^{(0)} \oplus B^{(1)}$ will be a finite dimensional superalgebra  over an algebraically closed field $F$ of characteristic zero.
	
	We start with the following.
	
	\begin{Lemma} \label{lemmaA_3}
		If $B^{(0)}$ contains three orthogonal idempotents $e_1,e_2,e_3$ such that
		$e_1j_1e_2j_2e_3j_3e_1\ne 0$, for some $j_1,j_2,j_3\in J(B)$, then there exists a $\mathbb{Z}_2$-graded subalgebra $\bar B$ of $B$  such that $Id(G(\bar
		B))\subseteq Id(\mathcal{A}_3)$.
	\end{Lemma}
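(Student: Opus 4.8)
The plan is to exhibit $\mathcal A_{3}$ inside the variety generated by $G(\bar B)$; since the base field has characteristic zero, $Id(G(\bar B))\subseteq Id(\mathcal A_{3})$ is the same as $\mathcal A_{3}\in var(G(\bar B))$. First I would normalise the data: replacing $j_{1},j_{2},j_{3}$ by $e_{1}j_{1}e_{2},\,e_{2}j_{2}e_{3},\,e_{3}j_{3}e_{1}$ keeps $j_{1}j_{2}j_{3}\ne 0$ while forcing $j_{1}\in e_{1}J(B)e_{2}$, $j_{2}\in e_{2}J(B)e_{3}$, $j_{3}\in e_{3}J(B)e_{1}$; decomposing each $j_{k}$ into its $\mathbb Z_{2}$-homogeneous parts and using multilinearity of the product, I may further assume each $j_{k}$ is homogeneous, of degree $\varepsilon_{k}\in\{0,1\}$, still with $j_{1}j_{2}j_{3}\ne 0$ (hence also $j_{1}j_{2}\ne 0$, $j_{2}j_{3}\ne 0$, and all $e_{i},j_{k}$ nonzero). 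I then take $\bar B$ to be the subalgebra of $B$ generated by $e_{1},e_{2},e_{3},j_{1},j_{2},j_{3}$: its generators being $\mathbb Z_{2}$-homogeneous, $\bar B$ is a graded subalgebra of $B$, and it retains the orthogonal idempotents $e_{1},e_{2},e_{3}$, so I have at my disposal the Peirce projections $x\mapsto e_{r}xe_{s}$ onto $e_{r}\bar B e_{s}$.

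To prove $Id(G(\bar B))\subseteq Id(\mathcal A_{3})$ I would show that a multilinear polynomial that fails to be an identity of $\mathcal A_{3}$ also fails on $G(\bar B)$. Work with the basis $p_{1}=e_{11}+e_{44},\ p_{2}=e_{22},\ p_{3}=e_{33},\ a=e_{12},\ b=e_{23},\ c=e_{34},\ ab=e_{13},\ bc=e_{24},\ abc=e_{14}$ of $\mathcal A_{3}$, for which every product of two basis elements is $0$ or another basis element, $p_{1}ap_{2}=a$, $p_{2}bp_{3}=b$, $p_{3}cp_{1}=c$, $abc$ is central and $ca=0$. Choose $w_{1},w_{2},w_{3}\in G$, with $w_{k}$ a product of $2-\varepsilon_{k}$ pairwise distinct Grassmann generators, the generators disjoint across $k$, so that $w_{k}\in G^{(\varepsilon_{k})}$, $w_{k}^{2}=0$ and $w_{1}w_{2}w_{3}\ne 0$. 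Lift the basis of $\mathcal A_{3}$ into $G(\bar B)$ by $p_{i}\mapsto 1\otimes e_{i}$, $a\mapsto w_{1}\otimes j_{1}$, $b\mapsto w_{2}\otimes j_{2}$, $c\mapsto w_{3}\otimes j_{3}$, $ab\mapsto w_{1}w_{2}\otimes j_{1}j_{2}$, $bc\mapsto w_{2}w_{3}\otimes j_{2}j_{3}$, $abc\mapsto w_{1}w_{2}w_{3}\otimes j_{1}j_{2}j_{3}$, writing $\widetilde v$ for the lift of $v$. Given a multilinear $f=f(x_{1},\dots,x_{n})$ and a substitution $x_{i}\mapsto v_{i}$ by basis elements with $f(v_{1},\dots,v_{n})=\sum_{u}\beta_{u}u\ne 0$, I would fix $u_{0}$ with $\beta_{u_{0}}\ne 0$, let $d$ be the Grassmann degree of $\widetilde u_{0}$ and $(r,s)$ the Peirce position of $u_{0}$, expand $f(\widetilde v_{1},\dots,\widetilde v_{n})=\sum_{\sigma}\alpha_{\sigma}\,\widetilde v_{\sigma(1)}\cdots\widetilde v_{\sigma(n)}$ (each summand a pure tensor, a product of $w_{k}$'s against a product of $e_{i}$'s and $j_{k}$'s), and project onto $G_{d}\otimes e_{r}\bar B e_{s}$, with $G_{d}$ the degree-$d$ homogeneous component of $G$. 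The point is that this projection equals $\beta_{u_{0}}\widetilde u_{0}\ne 0$, so $f\notin Id(G(\bar B))$: a monomial $\sigma$ with $v_{\sigma(1)}\cdots v_{\sigma(n)}=u_{0}$ lifts to $\widetilde u_{0}$; one whose product is another basis element lifts to an element of Grassmann degree $<d$ or of Peirce position $\ne(r,s)$; and one whose product is $0$ in $\mathcal A_{3}$ lifts either to $0$ — because the vanishing is a Peirce mismatch of the $e_{i}$'s (which persists in $\bar B$) or a repeated factor $w_{k}$ is produced and $w_{k}^{2}=0$ — or to a pure tensor lying in a Peirce piece $e_{r'}\bar B e_{s'}\ne e_{r}\bar B e_{s}$.

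The part I expect to be the real work is this last case, since the relation $ca=0$ of $\mathcal A_{3}$ is \emph{not} reflected by the lift: in $\bar B$ the product $j_{3}j_{1}$ need not vanish, so one must check that every monomial $\sigma$ which is killed in $\mathcal A_{3}$ by "crossing the seam'' $c\!\cdot\!a$ still contributes $0$ to the chosen projection. The mechanism is that $w_{k}^{2}=0$ forces any surviving contribution of Grassmann degree $d=\deg(w_{1}w_{2}w_{3})$ to use each $w_{k}$ exactly once, so the radical factors of $\sigma$ form an ordered partition of $\{1,2,3\}$ into blocks among $\{1\},\{2\},\{3\},\{1,2\},\{2,3\},\{1,2,3\}$; of the corresponding products of $j_{k}$'s the nonzero ones land in $e_{1}\bar B e_{1}$ precisely when the blocks occur in the cyclic order $1,2,3$ (these are the "good'' monomials, whose $\mathcal A_{3}$-value is $abc$), whereas every seam-crossing monomial's $j$-product lies in $e_{2}\bar B e_{2}$ or $e_{3}\bar B e_{3}$, away from the $(1,1)$ test piece, and similarly for the test pieces attached to the other $u_{0}$'s. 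The same bookkeeping can be repackaged as the assertion that the subalgebra of $G(\bar B)$ generated by the lifted idempotents and the lifted radical generators has $\mathcal A_{3}$ as a homomorphic image, with all the difficulty residing in checking that this map is well defined; I would present whichever of the two formulations reads more cleanly.
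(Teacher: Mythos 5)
Your argument is correct, but it handles the crucial point by a different mechanism than the paper. Both proofs start the same way: pass to homogeneous, Peirce-compressed elements $e_1j_1e_2,\ e_2j_2e_3,\ e_3j_3e_1$, take for $\bar B$ the graded subalgebra they generate together with the $e_i$, and decorate the basis with Grassmann elements of matching parity. The difference is how the one non-faithful relation (your ``seam'' $ca=0$ versus $e_3j_3e_1\cdot e_1j_1e_2\ne 0$) is disposed of. The paper never evaluates polynomials: it defines a superalgebra homomorphism $\varphi\colon\bar B\to UT_4(F)$, observes that $I=\ker\varphi$ is generated by $e_3j_3e_1j_1e_2$, exhibits a basis of $\bar B$ mod $I$, and then finds inside $G(\bar B/I)$ a nine-dimensional subalgebra $\mathcal C\cong\mathcal A_3$ (trivially graded), concluding $Id(\mathcal A_3)=Id(\mathcal C)\supseteq Id(G(\bar B/I))\supseteq Id(G(\bar B))$; the seam dies because it lies in $I$. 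You instead stay in $G(\bar B)$ and run a direct multilinear-evaluation argument, using the $(\text{Grassmann degree},\text{Peirce position})$ filtration to show that the seam-crossing monomials ($j_3j_1$, $j_2j_3j_1$, $j_3j_1j_2$) cannot contaminate the test component of the chosen basis element $u_0$; your closing remark, that this can be repackaged as a surjection from a subalgebra of $G(\bar B)$ onto $\mathcal A_3$, is the mirror image of the paper's ``subalgebra of a quotient.'' The paper's route buys brevity and a purely structural verification (one kernel computation and a spanning set); yours buys independence from identifying $\ker\varphi$ and a basis of $\bar B/I$, at the cost of the bucket bookkeeping. Two small inaccuracies in your sketch, neither fatal: a monomial evaluating to a different basis element can lift to Grassmann degree larger than $d$ (not only smaller) -- what matters is that it misses the bucket $G_d\otimes e_r\bar Be_s$, and indeed distinct basis elements occupy distinct $(\text{degree},\text{position})$ pairs since only $p_1$ and $abc$ share a Peirce position; and the seam products can also land in $e_3\bar Be_2$ (the word $j_3j_1$ itself), which is harmless since no basis element sits there, while the full-length words $j_2j_3j_1$, $j_3j_1j_2$ in $e_2\bar Be_2$, $e_3\bar Be_3$ are separated from $p_2$, $p_3$ by degree, exactly as your filtration requires.
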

	
	\begin{proof}
		By linearity we assume, as we may, that the elements $j_1, j_2,j_3$ are homogeneous.
		Let $\bar B$ be the $\mathbb{Z}_2$-graded subalgebra of $B$ generated by the homogeneous elements
		$$
		e_1,  e_2,  e_3, e_1j_1e_2, e_2j_2e_3, e_3j_3e_1.
		$$
		We build a homomorphism of superalgebras $\varphi :\bar B\rightarrow UT_4(F)$  by setting $\varphi(e_1)=e_{11}+e_{44}$, $\varphi(e_2)=e_{22}$,
		$\varphi(e_3)=e_{33}$, $\varphi(e_1j_1e_2)=e_{12}$, $\varphi(e_2j_2e_3)=e_{23}$ and $\varphi(e_3j_3e_1)=e_{34}$. Then $I=Ker\varphi$ is the ideal of $\bar B$
		generated by the element
		$$
		e_3j_3e_1j_1e_2
		$$
		and $\bar B/I\simeq \varphi(\bar B)$ is isomorphic to the algebra $\mathcal{A}_3$ with a suitable $\mathbb{Z}_2$-grading.
		
		Now, it can be checked that the elements
		$$
		e_1,  e_2,  e_3, e_1j_1e_2, e_2j_2e_3, e_3j_3e_1, e_1j_1e_2j_2e_3, e_2j_2e_3j_3e_1, e_1j_1e_2j_2e_3j_3e_1
		$$
		are linearly independent and form a basis of $\bar B$ mod $I$.
		By abuse of notation we identify these representatives with the corresponding cosets.
		
		Let $g_1,g_2,g_3$ be distinct homogeneous elements of $G$ having the same homogeneous degree as  $j_1,j_2,j_3$, respectively. Then the elements
		$$
		1\otimes e_1,\  1\otimes e_2,\  1\otimes e_3,\ g_1\otimes e_1j_1e_2,\ g_2\otimes e_2j_2e_3,\ g_3\otimes e_3j_3e_1,
		$$
		$$
		g_1g_2\otimes e_1j_1e_2j_2e_3,\ g_2g_1\otimes e_2j_2e_3j_3e_1,\  g_1g_2g_3 \otimes e_1j_1e_2j_2e_3j_3e_1,
		$$
		form a basis of a subalgebra $\mathcal C$ of $G(\bar B/I)$ isomorphic to $\mathcal{A}_3$ with trivial grading.
		
		It follows that  $Id(\mathcal{A}_3)= Id(\mathcal C) \supseteq Id(G(\bar B/I)) \supseteq Id(G(\bar B))$, and the proof is complete.
	\end{proof}

	\begin{Lemma} \label{lemmaA_4}
		If $B^{(0)}$ contains three orthogonal idempotents $e_1,e_2,e_3$ such that
		$j_1e_1j_2e_2j_3e_3j_4\ne 0$ for some  $j_1,j_2,j_3, j_4\in J(B)$, then there exists a $\mathbb{Z}_2$-graded subalgebra $\bar B$ of $B$  such that $Id(G(\bar
		B))\subseteq Id(\mathcal{A}_4)$.
	\end{Lemma}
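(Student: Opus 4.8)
The plan is to mimic the proof of Lemma \ref{lemmaA_3}, with $UT_5(F)$ and $\mathcal{A}_4$ playing the roles of $UT_4(F)$ and $\mathcal{A}_3$: the pattern $j_1e_1j_2e_2j_3e_3j_4$ of the hypothesis matches the five diagonal slots of $\mathcal{A}_4$, the idempotents $e_1,e_2,e_3$ occupying positions $2,3,4$ and the outer radical factors $j_1e_1$, $e_3j_4$ occupying the transitions $(1,2)$ and $(4,5)$. First, by linearity I may assume $j_1,j_2,j_3,j_4$ homogeneous. I would let $\bar B$ be the $\mathbb{Z}_2$-graded subalgebra of $B$ generated by the homogeneous elements $e_1$, $e_2$, $e_3$, $j_1e_1$, $e_1j_2e_2$, $e_2j_3e_3$, $e_3j_4$, and consider the assignment
$$
e_1\mapsto e_{22},\quad e_2\mapsto e_{33},\quad e_3\mapsto e_{44},\quad j_1e_1\mapsto e_{12},\quad e_1j_2e_2\mapsto e_{23},\quad e_2j_3e_3\mapsto e_{34},\quad e_3j_4\mapsto e_{45}.
$$

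The claim is that this extends to a homomorphism of superalgebras $\varphi\colon\bar B\to UT_5(F)$ onto the subalgebra $\mathcal{A}_4$, equipped with a suitable elementary $\mathbb{Z}_2$-grading, whose kernel is the ideal $I$ of $\bar B$ generated by the products of pairs of generators that $\varphi$ kills but which are nonzero in $B$; namely $e_i(j_1e_1)$ and $(e_3j_4)e_i$ for $i=1,2,3$, together with $(e_3j_4)(j_1e_1)$, $(e_3j_4)(e_1j_2e_2)$ and $(e_3j_4)(e_2j_3e_3)$ (the remaining vanishing products, such as $(j_1e_1)^2$ or $(e_1j_2e_2)(j_1e_1)$, follow from these). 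Granting this, the thirteen elements
$$
e_1,\ e_2,\ e_3,\ j_1e_1,\ e_1j_2e_2,\ e_2j_3e_3,\ e_3j_4,\ j_1e_1j_2e_2,\ e_1j_2e_2j_3e_3,\ e_2j_3e_3j_4,\ j_1e_1j_2e_2j_3e_3,\ e_1j_2e_2j_3e_3j_4,\ j_1e_1j_2e_2j_3e_3j_4
$$
map respectively onto $e_{22},e_{33},e_{44},e_{12},e_{23},e_{34},e_{45},e_{13},e_{24},e_{35},e_{14},e_{25},e_{15}$ and form a basis of $\bar B/I\cong\mathcal{A}_4$.

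Finally, choosing pairwise disjoint homogeneous elements $g_1,g_2,g_3,g_4\in G$ with $\deg g_r=\deg j_r$ (so that all the products $g_1,\dots,g_1g_2g_3g_4$ occurring below are nonzero), the elements $1\otimes e_1$, $1\otimes e_2$, $1\otimes e_3$, $g_1\otimes\overline{j_1e_1}$, $g_2\otimes\overline{e_1j_2e_2}$, $g_3\otimes\overline{e_2j_3e_3}$, $g_4\otimes\overline{e_3j_4}$ generate a subalgebra $\mathcal{C}$ of $G(\bar B/I)$ all of whose thirteen natural basis vectors are even and whose structure constants coincide with those of $\mathcal{A}_4$; hence $\mathcal{C}\cong\mathcal{A}_4$ with the trivial grading. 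Since $\bar B/I$ is a homomorphic image of $\bar B$ as a superalgebra, $G(\bar B/I)$ is a homomorphic image of $G(\bar B)$, and therefore
$$
Id(\mathcal{A}_4)=Id(\mathcal{C})\supseteq Id(G(\bar B/I))\supseteq Id(G(\bar B)),
$$
which proves the assertion.

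The hard part is the claim of the second paragraph: that $\varphi$ is well defined with the stated kernel, equivalently that the thirteen words are linearly independent modulo $I$, equivalently that the longest word $j_1e_1j_2e_2j_3e_3j_4$ does not collapse into $I$. This is where the hypothesis $j_1e_1j_2e_2j_3e_3j_4\ne 0$ is really needed, and it is more delicate than in Lemma \ref{lemmaA_3}: there every radical-type generator of $\bar B$ was flanked by idempotents on both sides, which rigidified the structure and made the kernel principal, whereas here the outer factors $j_1e_1$ and $e_3j_4$ are bordered by an idempotent on one side only, so a priori products like $e_3(j_1e_1)$ or $(e_3j_4)e_i$ could behave unexpectedly in $\bar B$ (this would happen exactly when $e_1,e_2,e_3$ lie on a short cycle in the radical quiver of $B$). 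I expect the verification to be carried out, in the spirit of the computations in Lemma \ref{TidealB}, by exhibiting explicit specializations that annihilate the generators of $I$ while keeping $j_1e_1j_2e_2j_3e_3j_4$ nonzero, after first normalizing $B$ so that such cyclic configurations are absent.
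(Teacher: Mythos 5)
Your construction is exactly the paper's: the same seven homogeneous generators of $\bar B$, the same assignment $e_1\mapsto e_{22}$, $e_2\mapsto e_{33}$, $e_3\mapsto e_{44}$, $j_1e_1\mapsto e_{12}$, $e_1j_2e_2\mapsto e_{23}$, $e_2j_3e_3\mapsto e_{34}$, $e_3j_4\mapsto e_{45}$, an equivalent list of generators of $I$ (the paper lists $e_3j_4e_a$, $e_aj_1e_1$ for $a=1,2,3$, and $e_3j_4j_1e_1$; your two extra products $(e_3j_4)(e_1j_2e_2)$ and $(e_3j_4)(e_2j_3e_3)$ already lie in the ideal these generate), the same thirteen-element basis of $\bar B$ modulo $I$, and the same final step $Id(\mathcal{A}_4)=Id(\mathcal{C})\supseteq Id(G(\bar B/I))\supseteq Id(G(\bar B))$ via the Grassmann elements $g_1,\ldots,g_4$. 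So in approach you have reproduced the proof.

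The genuine gap is that the step you yourself isolate as the hard part --- that the map is well defined with kernel $I$, equivalently that the thirteen words, and in particular $j_1e_1j_2e_2j_3e_3j_4$, stay linearly independent modulo $I$ --- is never established, and the plan you offer cannot establish it. One cannot ``normalize $B$ so that such cyclic configurations are absent'' ($B$ is given), and ``specializations'' are beside the point: what must be excluded is a linear dependence inside $\bar B$ itself. Your worry about the one-sided tails is substantive, not just a technicality: if for instance $j_1e_1=e_1j_1e_1\neq 0$ (a radical loop at $e_1$, perfectly compatible with the hypothesis $j_1e_1j_2e_2j_3e_3j_4\neq 0$, e.g. in a truncated path algebra with a loop at the first vertex), then $j_1e_1\in I$ and with it $(e_1j_1e_1)(e_1j_2e_2)(e_2j_3e_3)(e_3j_4)\in I$, so the quotient collapses and the assignment does not extend to a homomorphism at all. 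Hence completing the argument requires either showing that the products $e_aj_1e_1$, $e_3j_4e_b$, $e_3j_4j_1e_1$ may be assumed to vanish (say by modifying $j_1$ and $j_4$), or handling separately the configurations in which they do not; your sketch supplies neither, and the analogy with the computations of Lemma \ref{TidealB} (which concern identities of fixed algebras, not the internal structure of $\bar B$) does not transfer. For comparison, the paper's own proof simply asserts the kernel description and that the thirteen listed elements form a basis of $\bar B$ mod $I$, so the verification you defer is precisely the substance of the lemma; leaving it as an ``expected'' check, with an unworkable strategy, is a real gap rather than an omitted routine detail.
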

	
	\begin{proof}
		As in the previous lemma we assume that the elements  $j_1,j_2,j_3, j_4$ are homogeneous.
		Let $\bar B$ be the $\mathbb{Z}_2$-graded subalgebra of $B$ generated by the homogeneous elements
		$$
		e_1,  e_2,  e_3, j_1e_1, e_1j_2e_2, e_2j_3e_3, e_3j_4.
		$$
		We build a homomorphism of superalgebras $\psi :\bar B\rightarrow UT_5(F) $ defined by setting $\psi(e_1)=e_{22}$, $\psi(e_2)=e_{33}$, $\psi(e_3)=e_{44}$,
		$\psi(j_1e_1)=e_{12}$, $\psi(e_1j_2e_2)=e_{23}$, $\psi(e_2j_3e_3)=e_{34}$ and $\psi(e_3j_4)=e_{45}$. Then $I=Ker \psi$ is the homogeneous ideal of $\bar B$
		generated by the elements
		$$
		e_3j_4e_1, \, e_3j_4e_2, \, e_3j_4e_3, \, e_3j_4j_1e_1, \, e_3j_1e_1, \,  e_2j_1e_1, \,
		e_1j_1e_1
		$$
		and $\bar B/I\simeq \psi(\bar B)$ is isomorphic to the algebra $\mathcal{A}_4$ with a suitable $\mathbb{Z}_2$-grading.
		
		The elements
		$$
		e_1,  e_2,  e_3, \
		j_1e_1,\
		e_1j_2e_2,\
		e_2j_3e_3,\
		e_3j_4,\ j_1e_1j_2e_2,\ e_1j_2e_2j_3e_3,\
		$$
		$$
		e_2j_3e_3j_4,\ j_1e_1j_2e_2j_3e_3,\
		e_1j_2e_2j_3e_3j_4,\
		j_1e_1j_2e_2j_3e_3j_4
		$$
		are linearly independent  and form a basis of $\bar B$ mod $I$.
		
		By abuse of notation we identify these representatives with the corresponding cosets.
		It follows that, for distinct $g_i\in  G^{(0)}\cup G^{(1)}$ having the same homogeneous degree of the corresponding $j_i$, the elements
		$$
		1\otimes e_1,  1\otimes e_2,  1\otimes e_3,\ g_1\otimes  j_1e_1,\
		g_2 \otimes e_1j_2e_2,\
		g_3 \otimes e_2j_3e_3,\
		g_4 \otimes e_3j_4,\
		$$
		$$
		g_1g_2  \otimes j_1e_1j_2e_2,\ g_2g_3\otimes e_1j_2e_2j_3e_3,\
		g_3g_4\otimes e_2j_3e_3j_4,\
		$$$$
		g_1g_2g_3 \otimes j_1e_1j_2e_2j_3e_3,\
		g_2g_3g_4 \otimes e_1j_2e_2j_3e_3j_4,\
		g_1g_2g_3g_4 \otimes j_1e_1j_2e_2j_3e_3j_4,
		$$
		
		\smallskip
		\noindent are a basis of a subalgebra
		$\mathcal C$ of $G(\bar B/I)$ isomorphic to $\mathcal{A}_4$ with trivial grading.
		
		We have that $Id(\mathcal{A}_4)= Id(\mathcal C) \supseteq Id(G(\bar B/I)) \supseteq Id(G(\bar B))$, a desired conclusion.
	\end{proof}

	\begin{Lemma} \label{lemmaA_5}
		If $B^{(0)}$ contains two orthogonal idempotents $e_1,e_2$, with $e_2\in F\oplus cF$, such that
		$e_2j_2e_1j_1e_2\ne 0$, for some $j_1,j_2\in J(B)$,   then there exists a $\mathbb{Z}_2$-graded subalgebra $\bar B$ of $B$  such that $Id(G(\bar B))\subseteq
		Id(\mathcal{A}_5)$.
	\end{Lemma}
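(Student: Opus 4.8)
The plan is to imitate the proofs of Lemmas~\ref{lemmaA_3} and~\ref{lemmaA_4}: to exhibit inside $B$ a $\mathbb Z_2$-graded subalgebra $\bar B$ and a graded ideal $I$ of it such that $\bar B/I$ is graded-isomorphic to the finite-dimensional superalgebra whose Grassmann envelope is $\mathcal A_5$. As a first step, by linearity one may take $j_1,j_2$ homogeneous, of $\mathbb Z_2$-degrees $\varepsilon_1,\varepsilon_2$; and from $e_2\in F\oplus cF$ one obtains an odd element $\gamma\in B^{(1)}$ with $\gamma^2=e_2$ and $\gamma=e_2\gamma e_2$, so that $Fe_2\oplus F\gamma$ is the prescribed copy of $F+cF$ and $e_2\gamma=\gamma e_2=\gamma$, $e_1\gamma=\gamma e_1=0$. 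Then I would set $p=e_2j_2e_1$, $q=e_1j_1e_2$ (so $pq=e_2j_2e_1j_1e_2\neq 0$, while $p^2=q^2=p\gamma=\gamma q=0$) and let $\bar B$ be the $\mathbb Z_2$-graded subalgebra of $B$ generated by $e_1,e_2,\gamma,p,q$.

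Writing $F+cF=Fc_0\oplus Fc_1$ with $c_0$ the even identity, $c_1$ odd and $c_1^2=c_0$ (as in Section~\ref{section2}), consider the $9$-dimensional superalgebra
$$
\mathcal D=\{(a_{ij})\in UT_3(F+cF)\mid a_{11}=a_{33},\ a_{22}\in Fc_0\},
$$
for which one checks at once that $G(\mathcal D)=\mathcal A_5$. Next I would define a homomorphism of superalgebras $\varphi\colon\bar B\to UT_3(F+cF)$ by
$$
\varphi(e_2)=c_0(e_{11}+e_{33}),\quad \varphi(\gamma)=c_1(e_{11}+e_{33}),\quad \varphi(e_1)=c_0e_{22},\quad \varphi(p)=c_{\varepsilon_2}e_{12},\quad \varphi(q)=c_{\varepsilon_1}e_{23}.
$$
A short computation yields $\varphi(\gamma p)=c_{1+\varepsilon_2}e_{12}$, $\varphi(q\gamma)=c_{1+\varepsilon_1}e_{23}$, $\varphi(pq)=c_{\varepsilon_1+\varepsilon_2}e_{13}$, $\varphi(\gamma\cdot pq)=c_{1+\varepsilon_1+\varepsilon_2}e_{13}$ (indices read mod $2$) and $\varphi(qp)=0$, so that $\varphi(\bar B)=\mathcal D$. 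The core of the argument is then to show that $I:=\operatorname{Ker}\varphi$ is the ideal of $\bar B$ generated by $qp=e_1j_1e_2j_2e_1$, that modulo $I$ every product of the generators reduces to a linear combination of the nine elements $e_2,\gamma,e_1,p,\gamma p,q,q\gamma,pq,\gamma\cdot pq$, and that these nine are linearly independent in $\bar B$, so that they form a basis of $\bar B/I$. Since $\varphi$ is degree-preserving, this gives $\bar B/I\cong\varphi(\bar B)=\mathcal D$ \emph{as superalgebras}, hence $G(\bar B/I)=G(\mathcal D)=\mathcal A_5$ — and no regrading step of the kind needed in Lemmas~\ref{lemmaA_3} and~\ref{lemmaA_4} is required here, because $\mathcal A_5$ is already a Grassmann envelope. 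Finally, $\bar B/I$ being a homomorphic image of the graded subalgebra $\bar B\le B$, applying $G(-)$ produces a surjection $G(\bar B)\twoheadrightarrow G(\bar B/I)=\mathcal A_5$, whence $Id(G(\bar B))\subseteq Id(\mathcal A_5)$, as required.

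The hard part is the middle step. For the spanning assertion one runs the usual ``path'' reduction: the loop $e_1j_1e_2j_2e_1$ and the product $pq$ taken more than once are the only obstructions, and since $J(B)$ is nilpotent every sufficiently long word in $e_1,e_2,\gamma,p,q$ must vanish modulo $I$; one also needs that $\gamma$ commutes with $pq$ modulo $I$. For the independence one decomposes $\bar B$ along the corners $e_1Be_1,\ e_1Be_2,\ e_2Be_1,\ e_2Be_2$: across corners the elements are automatically separated, and within a corner either the radical filtration (via nilpotency of $J(B)$ together with $pq\neq 0$ and its consequences $p,q,\gamma p,q\gamma,\gamma\cdot pq\neq 0$) or the $\mathbb Z_2$-grading separates them — the point being that $\gamma$, being odd, lies in the opposite $\mathbb Z_2$-component from $e_2$, from $p$ relative to $\gamma p$, and so on, while $Fe_2\oplus F\gamma\cong F+cF$ is semisimple and hence meets $J(B)$ only in $0$. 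Thus the hypothesis $e_2\in F\oplus cF$ enters precisely here: the odd unit $\gamma$ supplies a second radical generator of opposite parity in each of the positions $(1,2),(2,3),(1,3)$, which is exactly what upgrades the single Grassmann-homogeneous copy of each entry to the full $G$ appearing in $\mathcal A_5=UT_3(G)$.
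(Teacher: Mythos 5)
Your construction is the paper's construction: the same graded subalgebra $\bar B$ generated by $e_1$, $e_2$, $\gamma=ce_2$, $p=e_2j_2e_1$, $q=e_1j_1e_2$, the same homomorphism $\varphi$ into $UT_3(F+cF)$ (the paper writes $\alpha\in\{1,c\}$ where you write $c_{\varepsilon_i}$), the same image $\mathcal D=\bar A$ with $G(\mathcal D)=\mathcal A_5$, and the same final step $G(\bar B)\twoheadrightarrow G(\bar B/I)$. So this is not a different route; the issue is with the step you yourself call the core.

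The genuine gap is your claim that $I=\ker\varphi$ is the ideal of $\bar B$ generated by $qp=e_1j_1e_2j_2e_1$ alone, together with a reduction in which the only obstructions are $qp$ and repeated factors $pq$. The element $q\gamma p=e_1j_1(ce_2)j_2e_1$ also lies in $\ker\varphi$ (its image is $c_{\varepsilon_1}e_{23}\cdot c_1(e_{11}+e_{33})\cdot c_{\varepsilon_2}e_{12}=0$), but it need not lie in the ideal generated by $qp$: since $\gamma q=p\gamma=0$, multiplying $qp$ by the generators can never insert $\gamma$ between $q$ and $p$. Concretely, let $R=Fe_1\oplus(Fe_2+F\gamma)$, let $M$ be the free graded $R$-bimodule on homogeneous $P=e_2Pe_1$ and $Q=e_1Qe_2$, and let $B=R\oplus M\oplus(M\otimes_R M)$ be the tensor algebra truncated in degree $3$. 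All hypotheses of the lemma hold (with $j_2=P$, $j_1=Q$, $pq=P\otimes Q\neq0$), but the ideal of $\bar B$ generated by $qp=Q\otimes P$ meets $J^2$ only in $F\,(Q\otimes P)$, so $q\gamma p=Q\otimes\gamma P$ survives in $\bar B/I$: a nonzero radical element in the corner $e_1\bar Be_1$ remains, your nine elements do not span, and $\bar B/I\not\cong\mathcal D$ (whose $(2,2)$ entry is just $F$). The paper's proof takes $I$ to be generated by \emph{both} $e_1j_1e_2j_2e_1$ and $e_1j_1ce_2j_2e_1$; that second generator is exactly what your reduction is missing. The least painful repair, consistent with your own setup, is to drop the attempt to exhibit generators of $I$ altogether: once $\varphi$ is accepted as a well-defined homomorphism of superalgebras with image $\mathcal D$ (the paper argues at this same level of formality), the first isomorphism theorem gives $\bar B/\ker\varphi\cong\mathcal D$, hence $Id(G(\bar B))\subseteq Id(G(\mathcal D))=Id(\mathcal A_5)$; relations such as ``$\gamma$ commutes with $pq$'' and the independence of the nine elements are then automatic modulo $\ker\varphi$ from their $\varphi$-images. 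Note also the small logical slip at the end of your middle step: independence of the nine elements \emph{in} $\bar B$ does not yield a basis of $\bar B/I$; you need independence modulo $I$, which again only comes from the map $\varphi$.
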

	\begin{proof}
		Let $\bar B$ be the $\mathbb{Z}_2$-graded subalgebra of $B$ generated by the homogeneous elements
		$$
		e_1,  e_2, ce_2,   e_1j_1e_2, e_2j_2e_1.
		$$ We consider the homomorphism of superalgebras $\varphi : \bar B\rightarrow UT_3(F+cF)$  by setting $\varphi(e_1)=e_{22}$, $\varphi(e_2)=e_{11}+e_{33}$,
		$\varphi(ce_2)=c(e_{11}+e_{33})$, $\varphi(e_1j_1e_2)=\alpha e_{23}$ and $\varphi(e_2j_2e_1)=\alpha e_{12}$,   where $\alpha \in \{1, c\}$ according to the
		homogeneous degree of $j_1$ and $j_2$.
		Then $I=Ker\varphi$ is the ideal of $ \bar B$ generated by
		$
		e_1j_1e_2j_2e_1, e_1j_1ce_2j_2e_1
		$
		and $ \bar B/I\simeq \varphi( \bar B)=\bar A$ where
		$$\bar A=\left \{\begin{pmatrix}
			a & x & y  \\
			0& b & z  \\
			0 & 0 & a
		\end{pmatrix} : b\in F, a,x,y,z\in F+cF \right \}.
		$$
		\noindent
		The following elements
		$$
		e_1,  e_2, ce_2,   e_1j_1e_2, e_2j_2e_1, e_1j_1ce_2, ce_2j_2e_1, e_2j_2e_1j_1e_2, ce_2j_2e_1j_1e_2
		$$
		are linearly independent and form a basis of $\bar B$ mod $I.$  We identify these representatives with the corresponding cosets.
		Then the elements
		$$
		1\otimes e_1,  1\otimes e_2,  1\otimes ce_2, 1 \otimes e_1j_1e_2, 1 \otimes  e_2j_2e_1,$$ $$ 1 \otimes e_1j_1ce_2,  1 \otimes ce_2j_2e_1,  1 \otimes
		e_2j_2e_1j_1e_2,  1 \otimes ce_2j_2e_1j_1e_2
		$$
		form a basis of $G(\bar B/I)\simeq G(\bar A)=\mathcal{A}_5$ and
		$Id(\mathcal{A}_5) \supseteq Id(G(\bar B))$.
	\end{proof}

	\begin{Lemma} \label{lemmaA_6}
		If $B^{(0)}$ contains two orthogonal idempotents $e_1,e_2$, with $e_2\in F\oplus cF$, such that
		$j_1e_1j_2e_2j_3\ne 0$ for some  $j_1,j_2,j_3\in J(B)$, then there exists a $\mathbb{Z}_2$-graded subalgebra $\bar B$ of $B$  such that $Id(G(\bar B))\subseteq
		Id(\mathcal{A}_6)$ .
	\end{Lemma}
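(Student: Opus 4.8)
The plan is to imitate the constructions of Lemmas \ref{lemmaA_3}--\ref{lemmaA_5}: I would produce a $\mathbb{Z}_2$-graded subalgebra $\bar B$ of $B$, a graded homomorphism of $\bar B$ onto the finite dimensional superalgebra underlying a suitable copy of $\mathcal{A}_6$, and then read off $Id(G(\bar B))\subseteq Id(\mathcal{A}_6)$. By linearity we may assume that $j_1,j_2,j_3$ are homogeneous. Since $e_2\in F\oplus cF$, the element $ce_2$ is an odd element of $B$ with $(ce_2)^2=e_2$, hence invertible inside the simple superalgebra $F+cF$ whose identity is $e_2$. Let $\bar B$ be the $\mathbb{Z}_2$-graded subalgebra of $B$ generated by the homogeneous elements
$$
e_1,\quad e_2,\quad ce_2,\quad j_1e_1,\quad e_1j_2e_2,\quad e_2j_3 .
$$
From $0\ne j_1e_1j_2e_2j_3=(j_1e_1)(e_1j_2e_2)(e_2j_3)$ one sees at once that $j_1e_1$, $e_1j_2e_2$, $e_2j_3$, $j_1e_1j_2e_2$ and $e_1j_2e_2j_3$ are nonzero; and multiplying on the suitable side by the invertible element $ce_2$ shows that the ``twisted'' products $e_1j_2(ce_2)=(e_1j_2e_2)(ce_2)$, $(ce_2)j_3=(ce_2)(e_2j_3)$ and $j_1e_1j_2(ce_2)$ are nonzero too.

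Next I would build a graded homomorphism $\varphi$ of $\bar B$ into $UT(2,3)_{(h_1,\dots,h_5)}$, with an elementary $\mathbb{Z}_2$-grading $(h_1,\dots,h_5)$ chosen below, by setting
$$
\varphi(e_1)=e_{22},\quad \varphi(e_2)=e_{33}+e_{44},\quad \varphi(ce_2)=e_{34}+e_{43},\quad \varphi(j_1e_1)=e_{12},
$$
sending $e_1j_2e_2$ to whichever of $e_{23},e_{24}$ has the same $\mathbb{Z}_2$-degree as $j_2$, and $e_2j_3$ to whichever of $e_{35},e_{45}$ has the same degree as $j_3$. One checks that the grading $(h_1,\dots,h_5)$ may be taken equal to $(0,0,0,1,0)$, $(0,0,0,1,1)$, $(0,1,1,0,1)$ or $(0,1,1,0,0)$ — according to the parity of $j_1$ and to whether or not $j_2$ and $j_3$ have the same parity — in such a way that $\varphi$ is graded and $\varphi(j_1e_1j_2e_2j_3)=e_{15}\ne 0$; then $\varphi(\bar B)$ is exactly the superalgebra underlying $\mathcal{A}_6$, $\mathcal{A}_6^1$, $\mathcal{A}_6^2$ or $\mathcal{A}_6^3$, respectively. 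The map $\varphi$ is well defined because the only monomials in the six generators with nonzero image are the ones corresponding to upper-triangular ``paths'' $e_{12}$, $e_{23}$ or $e_{24}$, $e_{35}$ or $e_{45}$, $e_{13}=e_{12}e_{23}$, $e_{14}=e_{12}e_{24}$, $e_{25}=e_{23}e_{35}=e_{24}e_{45}$, $e_{15}=e_{13}e_{35}=e_{14}e_{45}$, together with $e_{22}$, $e_{33}+e_{44}$, $e_{34}+e_{43}$, and by the previous paragraph each such monomial has a nonzero preimage in $\bar B$. Put $I=Ker\varphi$, the homogeneous ideal of $\bar B$ generated by the ``forbidden'' products ($e_1(j_1e_1)$, $e_2(j_1e_1)$, $(e_1j_2e_2)(j_1e_1)$, $(e_2j_3)(j_1e_1)$, $(e_1j_2e_2)^2$, $(e_2j_3)(e_1j_2e_2)$, $e_2(e_2j_3)e_2$, etc.). Then the cosets of
$$
e_1,\ e_2,\ ce_2,\ j_1e_1,\ e_1j_2e_2,\ e_1j_2(ce_2),\ e_2j_3,\ (ce_2)j_3,
$$
$$
j_1e_1j_2e_2,\ j_1e_1j_2(ce_2),\ e_1j_2e_2j_3,\ j_1e_1j_2e_2j_3
$$
are linearly independent, since $\varphi$ carries them to distinct elements of a basis of $\varphi(\bar B)$, and they span $\bar B/I$ (by reducing an arbitrary monomial modulo $I$); hence they form a basis of $\bar B/I$, and $\bar B/I$ is isomorphic, as a superalgebra, to the superalgebra underlying $\mathcal{A}_6$ (resp. $\mathcal{A}_6^1$, $\mathcal{A}_6^2$, $\mathcal{A}_6^3$).

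Consequently $G(\bar B/I)\simeq \mathcal{A}_6$ (resp. $\mathcal{A}_6^1$, $\mathcal{A}_6^2$, $\mathcal{A}_6^3$), and since the projection $\bar B\to \bar B/I$ induces a surjection $G(\bar B)\to G(\bar B/I)$, we conclude, using Proposition \ref{78} and Remark \ref{rem1}, that
$$
Id(G(\bar B))\subseteq Id(G(\bar B/I))=\langle x_1[x_2,x_3][x_4,x_5,x_6]x_7\rangle_T=Id(\mathcal{A}_6),
$$
which is the assertion.

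The step I expect to cause the most trouble is the bookkeeping of the $\mathbb{Z}_2$-gradings: one must verify that the four re-gradings of Remark \ref{rem1} really exhaust all combinations of parities of $j_1,j_2,j_3$. An odd $j_1$ cannot be absorbed, since there is no element ``$ce_1$'', so it forces a re-grading of the first block, pushing us from $\mathcal{A}_6$ into $\mathcal{A}_6^2$ or $\mathcal{A}_6^3$; and since $e_1j_2e_2$ and $e_2j_3$ must be sent to matrix units of the correct parity while keeping $\varphi(j_1e_1j_2e_2j_3)$ nonzero, whether or not $j_2$ and $j_3$ have the same parity decides between the pair $\mathcal{A}_6,\mathcal{A}_6^2$ and the pair $\mathcal{A}_6^1,\mathcal{A}_6^3$. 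The only other delicate point is the non-vanishing of the twisted path elements $e_1j_2(ce_2)$, $(ce_2)j_3$, $j_1e_1j_2(ce_2)$, where the invertibility of $ce_2$ in $F+cF$ is exactly what is used; everything else is routine and parallels the proofs of Lemmas \ref{lemmaA_3}--\ref{lemmaA_5}.
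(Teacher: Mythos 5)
Your proposal follows essentially the same route as the paper's proof: the same six generators for $\bar B$, the same homomorphism onto a graded subalgebra of $UT(2,3)$ with the elementary grading chosen according to the parity of $j_1$ and whether $j_2,j_3$ have equal parity, the same twelve-element basis of $\bar B/I$, and the same appeal to Remark \ref{rem1} and Proposition \ref{78} to reduce $\mathcal{A}_6^1,\mathcal{A}_6^2,\mathcal{A}_6^3$ back to $\mathcal{A}_6$. The one soft spot---your argument that $\varphi$ is well defined, since nonzero preimages of the path monomials do not by themselves guarantee that every relation among the generators of $\bar B$ is respected by their images---is treated at the same level of detail in the paper itself, which simply asserts the existence of $\varphi$ and the basis of $\bar B/I$, so it does not separate your argument from the published one.
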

	\begin{proof}
		Let us assume that the elements  $j_1,j_2, j_3$ are homogeneous of degree zero and let $\bar B$ be the $\mathbb{Z}_2$-graded subalgebra of $B$ generated by the
		homogeneous elements
		$$
		e_1,  e_2, ce_2,   j_1e_1, e_1j_2e_2, e_2j_3.
		$$
		such that $j_1e_1j_2e_2j_3\ne 0$.
		We consider the homomorphism of superalgebras $\varphi : \bar B\rightarrow UT(2,3)_{(0,0,0,1,0)}$  by setting $\varphi(e_1)=e_{22}$,
		$\varphi(e_2)=e_{33}+e_{44}$, $\varphi(ce_2)=e_{34}+e_{43}$, $\varphi(j_1e_1)=e_{12}$, $\varphi(e_1j_2e_2)=e_{23}$   and $\varphi(e_2j_3)=e_{35}$.
		Let $I=Ker\varphi$ then $\bar B/I\simeq \varphi(\bar B)=\bar A$ where

		$$
		\bar A= \begin{pmatrix}
			0 & a & b & d & e  \\
			0 & f & g & h & l \\
			0 & 0 & u & v & m \\
			0 & 0 & v & u & n \\
			0 & 0 & 0 & 0& 0
		\end{pmatrix} =  \begin{pmatrix}
			0 & a & b & 0 & e  \\
			0 & f & g & 0 & l \\
			0 & 0 & u & 0 & m \\
			0 & 0 & 0 & u & 0 \\
			0 & 0 & 0 & 0& 0
		\end{pmatrix}  \oplus \begin{pmatrix}
			0 & 0 & 0 & d & 0  \\
			0 & 0 & 0 & h & 0 \\
			0 & 0 & 0 & v & 0 \\
			0 & 0 & v & 0 & n \\
			0 & 0 & 0 & 0& 0
		\end{pmatrix}
		$$
		
		\smallskip
		
		The following elements
		$$
		e_1,  e_2,\ ce_2,\   j_1e_1,\ e_1j_2e_2,\ e_2j_3,\ j_1e_1j_2e_2,
		$$
		$$
		j_1e_1j_2ce_2,\ j_1e_1j_2e_2j_3,\ e_1j_2ce_2,\ e_1j_2e_2j_3,\
		ce_2j_3
		$$
		are linearly independent mod $I$ and form a basis of $\bar B/I$.
		By the same procedure of the previous lemmas it follows that $G(\bar A)=\mathcal{A}_6$
		and so  $Id(\mathcal{A}_6) \supseteq Id(G(\bar B))$.
		
		\smallskip
		The same result is obtained when $j_1 \in J^{(0)}$ and $j_2, j_3 \in J^{(1)}.$
		Moreover if $j_1 \in J^{(0)}$ and $j_2, j_3$ are of different homogeneous degree
		we obtain that  $Id(\mathcal{A}_6^1) \supseteq Id(G(\bar B))$.

		In case $j_1 \in J^{(1)}$ we get that either $Id(\mathcal{A}_6^2) \supseteq Id(G(\bar B))$
		or $Id(\mathcal{A}_6^3) \supseteq Id(G(\bar B))$ according to whether $j_2, j_3$ are of the same or of different homogeneous degree. Since, by Remark
		\ref{rem1}, $Id(\mathcal{A}_6^1)=Id(\mathcal{A}_6^2)=Id(\mathcal{A}_6^3)=Id(\mathcal{A}_6)$, we get the desired result.
	\end{proof}
	
	\begin{Lemma} \label{lemmaA_7}
		If $B^{(0)}$ contains two orthogonal idempotents $e_1,e_2$, with $e_2\in F\oplus cF$, such that
		$j_1e_2j_2e_1j_3\ne 0$ for some  $j_1,j_2,j_3\in J(B)$, then there exists a $\mathbb{Z}_2$-graded subalgebra $\bar B$ of $B$  such that $Id(G(\bar B))\subseteq
		Id(\mathcal{A}_7)$.
	\end{Lemma}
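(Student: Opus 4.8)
The plan is to mirror the proof of Lemma \ref{lemmaA_6}, interchanging the roles of the two idempotents and replacing the ambient algebra of upper block-triangular matrices of block sizes $(2,3)$ by the one of block sizes $(3,2)$. As there, I would first use linearity to reduce to the case where $j_1,j_2,j_3$ are homogeneous, and after a preliminary normalization — replacing each $j_i$ by a suitable component cut out by primitive orthogonal idempotents of $B^{(0)}$, chosen so that $j_1e_2j_2e_1j_3\ne 0$ is preserved — one may assume that the products occurring below multiply like matrix units. I would treat in detail the main case $j_1,j_2,j_3\in J(B)^{(0)}$.

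Let $\bar B$ be the $\mathbb Z_2$-graded subalgebra of $B$ generated by the homogeneous elements $e_1,e_2,ce_2,j_1e_2,e_2j_2e_1,e_1j_3$, and define a homomorphism of superalgebras $\varphi:\bar B\to UT(3,2)_{(0,0,1,0,0)}$ by
$$\varphi(e_1)=e_{44},\quad \varphi(e_2)=e_{22}+e_{33},\quad \varphi(ce_2)=e_{23}+e_{32},$$
$$\varphi(j_1e_2)=e_{12},\quad \varphi(e_2j_2e_1)=e_{24},\quad \varphi(e_1j_3)=e_{45}.$$
One checks that this assignment respects homogeneous degrees and all the multiplicative relations among the generators, so that $\varphi$ is well defined. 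Computing the products of the images, the image $\bar A=\varphi(\bar B)$ is spanned by the twelve linearly independent elements
$$e_{44},\ e_{22}+e_{33},\ e_{23}+e_{32},\ e_{12},\ e_{13},\ e_{24},\ e_{34},\ e_{45},\ e_{14},\ e_{25},\ e_{35},\ e_{15},$$
i.e. $\bar A$ is precisely the graded subalgebra of $UT(3,2)_{(0,0,1,0,0)}$ determined by $a_{11}=a_{21}=a_{31}=a_{54}=a_{55}=0$, $a_{22}=a_{33}$, $a_{23}=a_{32}$, whose Grassmann envelope is $\mathcal A_7$. Since $\varphi$ is surjective, the induced superalgebra homomorphism $G(\bar B)\to G(\bar A)=\mathcal A_7$ is surjective, and hence $Id(G(\bar B))\subseteq Id(\mathcal A_7)$.

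For the remaining homogeneous-degree distributions of $(j_1,j_2,j_3)$ I would keep the copy of $F+cF$ at the positions $\{2,3\}$ and keep $\varphi(e_1)=e_{44}$, adjusting only which of $e_{12},e_{13}$ (resp. which of $e_{24},e_{34}$) is taken as the image of $j_1e_2$ (resp. of $e_2j_2e_1$) and the underlying elementary $\mathbb Z_2$-grading of $UT(3,2)$. Each such choice produces, in the same way, a surjection onto a graded subalgebra whose Grassmann envelope is one of $\mathcal A_7,\mathcal A_7^1,\mathcal A_7^2,\mathcal A_7^3$ from Remark \ref{rem1}, the representative depending on the homogeneous degrees of $j_1,j_2,j_3$; since $Id(\mathcal A_7^1)=Id(\mathcal A_7^2)=Id(\mathcal A_7^3)=Id(\mathcal A_7)$ by Remark \ref{rem1}, in all cases $Id(G(\bar B))\subseteq Id(\mathcal A_7)$, which finishes the proof.

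The step I expect to demand the most care is the verification that $\varphi$ is a genuine homomorphism — equivalently, that the twelve displayed elements are linearly independent modulo $\ker\varphi$ — together with the preliminary normalization of the $j_i$: this is a little more delicate than in Lemmas \ref{lemmaA_3} and \ref{lemmaA_4} because the generators $j_1e_2$ and $e_1j_3$ are one-sided rather than sandwiched between two orthogonal idempotents, and because the $F+cF$ summand contributes, through the action of $ce_2$, the extra entries $e_{34}$ and $e_{35}$ (images of $ce_2j_2e_1$ and $ce_2j_2e_1j_3$) to $\bar A$. The rest is the routine bookkeeping matching each admissible degree pattern of $(j_1,j_2,j_3)$ with one of $\mathcal A_7,\mathcal A_7^1,\mathcal A_7^2,\mathcal A_7^3$, i.e. checking that the required elementary grading on $UT(3,2)$ always exists and has the correct block shape.
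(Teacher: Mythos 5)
Your proposal follows exactly the route the paper itself takes: its proof of this lemma just says to repeat the argument of Lemma \ref{lemmaA_6} for the chain $j_1e_2j_2e_1j_3$, splitting into cases according to the homogeneous degrees of $j_1,j_2,j_3$ and invoking Remark \ref{rem1} to get $Id(\mathcal{A}_7^1)=Id(\mathcal{A}_7^2)=Id(\mathcal{A}_7^3)=Id(\mathcal{A}_7)$, which is precisely your plan, only with the map onto the graded subalgebra of $UT(3,2)_{(0,0,1,0,0)}$ written out explicitly. Your main-case construction (generators $e_1,e_2,ce_2,j_1e_2,e_2j_2e_1,e_1j_3$, the twelve-dimensional image whose Grassmann envelope is $\mathcal{A}_7$) and the degree bookkeeping for the variants are correct, so this is essentially the paper's proof.
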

	\begin{proof}
		By following the proof of Lemma \ref{lemmaA_6} we obtain that,
		if $j_3\in J^{(0)}$ and $j_1, j_2$ are of the same homogeneous degree
		then $Id(\mathcal{A}_7) \supseteq Id(G(\bar B))$. Otherwise, if $j_3\in J^{(0)}$ and $j_1, j_2$ are of different homogeneous degree
		then $Id(\mathcal{A}_7^1) \supseteq Id(G(\bar B))$. Similarly, if $j_3\in J^{(1)},$ we get either $Id(\mathcal{A}_7^2) \supseteq Id(G(\bar B))$ or
		$Id(\mathcal{A}_7^3) \supseteq Id(G(\bar B))$.
		
		Recalling  Remark \ref{rem1} we have that  $Id(\mathcal{A}_7^1)=Id(\mathcal{A}_7^2)=Id(\mathcal{A}_7^3)=Id(\mathcal{A}_7)$ and we are done.
	\end{proof}
	
	\begin{Lemma} \label{lemmaA_8}
		If $B^{(0)}$ contains two orthogonal idempotents $e_1,e_2$, with $e_2\in F\oplus cF$, such that
		$e_1j_1e_2j_2e_1\ne 0$, for some $j_1,j_2\in J(B)$,   then there exists a $\mathbb{Z}_2$-graded subalgebra $\bar B$ of $B$  such that $Id(G(\bar B))\subseteq
		Id( \mathcal{A}_8)$ or $Id(G(\bar B))\subseteq Id( \mathcal{A}_9)$.
	\end{Lemma}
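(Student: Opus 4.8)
The plan is to mimic the constructions in Lemmas~\ref{lemmaA_5}--\ref{lemmaA_7}. By multilinearity we may assume $j_1,j_2$ are homogeneous. Let $\bar B$ be the $\mathbb{Z}_2$-graded subalgebra of $B$ generated by the homogeneous elements $e_1,\ e_2,\ ce_2,\ e_1j_1e_2,\ e_2j_2e_1$, and let $I$ be the ideal of $\bar B$ generated by $e_2j_2e_1j_1e_2$ and $e_1j_1(ce_2)j_2e_1$: the first collapses the ``loop through $e_2$'' (which must vanish because $e_1$ is the outer idempotent), while the second collapses the copy of the loop at $e_1$ running through the odd summand of the $e_2$-block. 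The first step would be to check that
$$
e_1,\ e_2,\ ce_2,\ e_1j_1e_2,\ e_2j_2e_1,\ e_1j_1ce_2,\ ce_2j_2e_1,\ e_1j_1e_2j_2e_1
$$
spans $\bar B/I$ and that these eight elements are linearly independent modulo $I$, so that $\dim_F\bar B/I=8$. This is where the hypothesis enters: $e_1j_1e_2j_2e_1\neq 0$ forces $e_1j_1e_2,\ e_2j_2e_1\neq 0$, and since right (resp.\ left) multiplication by $ce_2$ is an involution on $e_1Be_2$ (resp.\ on $e_2Be_1$) we also get $e_1j_1ce_2,\ ce_2j_2e_1\neq 0$; finally $e_1j_1(ce_2)j_2e_1$ is homogeneous of degree $\deg j_1+\deg j_2+1$ whereas $e_1j_1e_2j_2e_1$ is homogeneous of degree $\deg j_1+\deg j_2$, so discarding the former never touches the latter and the $e_1$-corner of $\bar B/I$ stays two-dimensional.

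Next I would define a homomorphism of superalgebras $\varphi\colon\bar B/I\to UT(1,2,1)$ by $e_1\mapsto e_{11}+e_{44}$, $e_2\mapsto e_{22}+e_{33}$, $ce_2\mapsto e_{23}+e_{32}$, together with $e_1j_1e_2\mapsto e_{12}$ or $e_{13}$ and $e_2j_2e_1\mapsto e_{24}$ or $e_{34}$, the target being equipped with the elementary grading $(0,0,1,0)$ when $\deg j_1=\deg j_2$ and with $(0,0,1,1)$ otherwise. In each of the four parity cases the two remaining choices are uniquely determined by homogeneity, and a direct check shows that in every case $\varphi(e_1j_1e_2)\,\varphi(e_2j_2e_1)=e_{14}\neq 0$ (so the surviving loop is detected) while $\varphi(e_1j_1ce_2j_2e_1)=0$ (consistent with the relation imposed in $I$). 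Comparing dimensions, $\varphi$ is a graded isomorphism of $\bar B/I$ onto
$$
C=\{(a_{ij})\in UT(1,2,1)\mid a_{11}=a_{44},\ a_{22}=a_{33},\ a_{23}=a_{32}\}
$$
with the corresponding grading, hence $G(\bar B/I)\cong G(C)$, which is $\mathcal{A}_8$ when $\deg j_1=\deg j_2$ and $\mathcal{A}_9$ otherwise. From the graded surjection $G(\bar B)\twoheadrightarrow G(\bar B/I)$ we conclude $Id(G(\bar B))\subseteq Id(G(\bar B/I))$, i.e.\ $Id(G(\bar B))\subseteq Id(\mathcal{A}_8)$ or $Id(G(\bar B))\subseteq Id(\mathcal{A}_9)$.

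The routine part is checking that $\varphi$ is a well-defined homomorphism — which reduces to observing that every word in the five generators whose image is a nonzero matrix unit is already one of the eight ``short'' elements above, all shown to be nonzero in $\bar B$ — and that the multiplication table of those eight elements matches that of the corresponding matrix units. The delicate point, and the one I expect to cost the most care, is the parity bookkeeping: one must run all four cases for $(\deg j_1,\deg j_2)$ and verify that each produces exactly $\mathcal{A}_8$ or $\mathcal{A}_9$, exploiting the freedom to place $e_1j_1e_2$ inside $\{e_{12},e_{13}\}$ and $e_2j_2e_1$ inside $\{e_{24},e_{34}\}$ so as to absorb the degrees of $j_1$ and $j_2$ without losing the loop $e_{14}$.
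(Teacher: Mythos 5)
Your proposal is correct and follows essentially the same route as the paper: the same graded subalgebra $\bar B$ generated by $e_1, e_2, ce_2, e_1j_1e_2, e_2j_2e_1$, the same ideal $I=(e_2j_2e_1j_1e_2,\, e_1j_1ce_2j_2e_1)$, the same eight-element basis of $\bar B/I$, and the same identification of the quotient with the symmetric pattern subalgebra of $UT(1,2,1)$ carrying the elementary grading $(0,0,1,0)$ or $(0,0,1,1)$ according to the parities of $j_1,j_2$, whose Grassmann envelope is $\mathcal{A}_8$ or $\mathcal{A}_9$. The only cosmetic difference is that you absorb the parities by letting the images of $e_1j_1e_2$ and $e_2j_2e_1$ range over $\{e_{12},e_{13}\}$ and $\{e_{24},e_{34}\}$, while the paper keeps $e_{12},e_{24}$ and only switches the grading of the target; your parity bookkeeping checks out in all four cases.
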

	
	\begin{proof}
		First we suppose that the elements  $j_1,j_2$ are homogeneous of degree zero.
		Let $\bar B$ be the $\mathbb{Z}_2$-graded subalgebra of $B$ generated by the homogeneous elements
		$$
		e_1,  e_2,\ ce_2,\   e_1j_1e_2,\ e_2j_2e_1.
		$$
		We consider the homomorphism of superalgebras $\varphi : \bar B\rightarrow UT(1,2,1)_{(0,0,1,0)}$  by setting $\varphi(e_1)=e_{11}+e_{44}$,
		$\varphi(e_2)=e_{22}+e_{33}$, $\varphi(ce_2)=e_{23}+e_{32}$ and $\varphi(e_1j_1e_2)=e_{12}$, $\varphi(e_2j_2e_1)=e_{24}$.
		Then $I=Ker\varphi$ is the ideal of $\bar B$ generated by
		$
		e_2j_2e_1j_1e_2, e_1j_1ce_2j_2e_1
		$
		and $\bar B/I\simeq \varphi(\bar B)= \bar A$ where

		$$
		\bar A= \begin{pmatrix}
			a & b & d & e  \\
			0 & u & v  & h \\
			0 & v & u & m \\
			0 & 0 &  0  & a
		\end{pmatrix} =  \begin{pmatrix}
			a & b & 0 & e \\
			0  & u & 0  & h \\
			0 & 0 & u & 0\\
			0 &  0 &  0 & a
		\end{pmatrix}  \oplus \begin{pmatrix}
			0 & 0 & d & 0  \\
			0& 0 & v  & 0 \\
			0 & v & 0 & m \\
			0 &  0 & 0  & 0
		\end{pmatrix}
		$$
		
		\smallskip
		
		The following elements
		$$
		e_1,  e_2,\ ce_2,\   e_1j_1e_2,\ e_2j_2e_1,\ e_1j_1ce_2,\ ce_2j_2e_1,\ e_1j_1e_2j_2e_1
		$$
		are linearly independent and form a basis of $\bar B$ mod $I.$  We identify these representatives with the corresponding cosets.
		The elements
		$$
		1\otimes e_1,  1\otimes e_2,  1\otimes ce_2, 1 \otimes e_1j_1e_2, 1 \otimes  e_2j_2e_1, 1 \otimes e_1j_1ce_2,  1 \otimes ce_2j_2e_1,  1 \otimes e_1j_1e_2j_2e_1
		$$
		form a basis of $G(\bar B/I)$ isomorphic to $\mathcal{A}_8$.
		Thus  $Id( \mathcal{A}_8) \supseteq Id(G(\bar B))$.
		
		If $j_1, j_2 \in J^{(1)}$ we obtain the same result.
		If $j_1$ and $j_2$ are of different homogeneous degree we get a homomorphism of superalgebras $\varphi : \bar B\rightarrow UT(1,2,1)_{(0,0,1,1)}$ such that
		$\bar B/I\simeq \varphi(\bar B)= \bar A$ where
		
		$$\bar A=\begin{pmatrix}
			a & b & d & e  \\
			0& u & v  & h \\
			0 & v & u & m \\
			0 &  0 & 0  & a
		\end{pmatrix} =  \begin{pmatrix}
			a & b & 0 & 0  \\
			0& u & 0  & 0 \\
			0& 0 & u & m\\
			0 &0   & 0  & a
		\end{pmatrix}  \oplus \begin{pmatrix}
			0 & 0 & d & e  \\
			0 & 0 & v  & h \\
			0 & v & 0 & 0 \\
			0 &  0 &  0 & 0
		\end{pmatrix}
		$$
		
		\smallskip
		\noindent
		Its Grassmann envelope is $G(\bar A)=\mathcal{A}_9$.
		Thus  $Id( \mathcal{A}_9) \supseteq Id(G(\bar B))$ and we are done.
	\end{proof}

	\bigskip
	
	\section{The main results}
	In this section we characterize the varieties with proper central exponent grather or equal to two.
	
	\begin{Theorem} \label{teorema1}
		Let $F$ be a  field of characteristic zero and $\mathcal{V}$ a variety of algebras over $F$. Then  $exp^\delta(\mathcal V)> 2$ if and only if
		$\mathcal{A}_i\in \mathcal{V}$ for some
		$1\le i\le 9$.
	\end{Theorem}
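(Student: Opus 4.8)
The plan is to prove the two implications separately, with the forward (``if'') direction being essentially immediate and the reverse (``only if'') direction carrying the bulk of the work via a structural analysis of the generating superalgebra.

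\textbf{The easy direction.} Suppose $\mathcal{A}_i \in \mathcal{V}$ for some $1 \le i \le 9$. Since the proper central exponent is monotone under inclusion of varieties (equivalently, $\mathrm{Id}(\mathcal{V}) \subseteq \mathrm{Id}(\mathcal{A}_i)$ forces $exp^\delta(\mathcal{V}) \ge exp^\delta(\mathcal{A}_i)$, because every proper central polynomial of $\mathcal{A}_i$ that is not an identity of $\mathcal{A}_i$ is also a proper central polynomial of a generator of $\mathcal{V}$ — using \cite[Remark 1]{GZ2019} that proper central polynomials depend only on the $T$-ideal), we get $exp^\delta(\mathcal{V}) \ge exp^\delta(\mathcal{A}_i)$. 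For $i=1,2$ this is $4$, and for $i=3,\dots,9$ this is $3$ by Lemma~\ref{exponent Ai}; in all cases $exp^\delta(\mathcal{V}) > 2$.

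\textbf{The hard direction.} Suppose $exp^\delta(\mathcal{V}) > 2$, i.e.\ $exp^\delta(\mathcal{V}) \ge 3$. Writing $\mathcal{V} = var(A)$, by Kemer's theorem there is a finite dimensional superalgebra $B = B^{(0)} \oplus B^{(1)}$ with $\mathrm{Id}(A) = \mathrm{Id}(G(B))$, so $exp^\delta(G(B)) \ge 3$. Decompose $B = \bar B + J$ with $\bar B = B_1 \oplus \cdots \oplus B_q$ a sum of simple superalgebras and $J = J(B)$. By the description of $exp^\delta$ recalled in Section~\ref{section2}, there is a centrally admissible subalgebra $C = B_{i_1} \oplus \cdots \oplus B_{i_k}$ of $B$ with $\dim_F C \ge 3$; in particular $B_{i_1} J \cdots J B_{i_k} \ne 0$ and there is a proper central polynomial of $G(B)$ with a nonzero evaluation involving at least one element of each $G(B_{i_h})$. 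Now I would split into cases according to $k$ and the isomorphism types of the simple blocks $B_{i_h}$ appearing (using the Kemer classification: each is $M_{a,b}(F)$ or $M_c(F)+cM_c(F)$). If some single block $B_{i_h}$ already has dimension $\ge 4$, then it contains (a graded copy of) $M_2(F)$ or $M_{1,1}(F)$, and one checks $\mathrm{Id}(G(B)) \subseteq \mathrm{Id}(M_2(F))$ or $\subseteq \mathrm{Id}(M_{1,1}(G))$, giving $\mathcal{A}_1$ or $\mathcal{A}_2$ in $\mathcal{V}$. Otherwise every block in $C$ has dimension $1$ or $2$ (the two-dimensional option being $F + cF$), and since $\dim_F C \ge 3$ with the product-nonvanishing condition, the configuration of idempotents and radical elements realizing $B_{i_1} J \cdots J B_{i_k} \ne 0$ falls into exactly one of the patterns handled in Lemmas~\ref{lemmaA_3}--\ref{lemmaA_8}: three orthogonal idempotents in $B^{(0)}$ with a nonzero ``cyclic'' product $e_1 j_1 e_2 j_2 e_3 j_3 e_1$ (Lemma~\ref{lemmaA_3}) or a ``linear'' one $j_1 e_1 j_2 e_2 j_3 e_3 j_4$ (Lemma~\ref{lemmaA_4}); or two orthogonal idempotents one of which lies in $F \oplus cF$, with the product shaped as $e_2 j_2 e_1 j_1 e_2$, $j_1 e_1 j_2 e_2 j_3$, $j_1 e_2 j_2 e_1 j_3$, or $e_1 j_1 e_2 j_2 e_1$ (Lemmas~\ref{lemmaA_5}--\ref{lemmaA_8}). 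Each lemma yields $\mathrm{Id}(G(\bar B')) \subseteq \mathrm{Id}(\mathcal{A}_i)$ for a graded subalgebra $\bar B'$ of $B$, hence $\mathrm{Id}(A) = \mathrm{Id}(G(B)) \subseteq \mathrm{Id}(G(\bar B')) \subseteq \mathrm{Id}(\mathcal{A}_i)$, i.e.\ $\mathcal{A}_i \in var(A) = \mathcal{V}$.

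\textbf{Main obstacle.} The delicate point is the case analysis ensuring \emph{completeness}: I must argue that $\dim_F C \ge 3$ together with centrally admissibility (not just admissibility) forces one of the seven listed configurations. The subtlety is twofold. First, centrally admissible is strictly stronger than admissible, so I need to use the existence of the proper central polynomial — not merely $B_{i_1} J \cdots J B_{i_k} \ne 0$ — to rule out configurations that would otherwise be allowed (for instance, chains of $1$-dimensional blocks with no ``return'' to a common idempotent, which are admissible but whose associated algebra has no proper central polynomial, as the $UT_n(F)$ example in the introduction shows). Second, when a $2$-dimensional block $F + cF$ is involved I must verify that its Grassmann envelope contributes a genuine extra dimension and that the idempotent $e_2$ can be taken inside $F \oplus cF$ as required by Lemmas~\ref{lemmaA_5}--\ref{lemmaA_8}; this uses the identification of $F + cF$ with the graded subalgebra of $M_2(F)_{(0,1)}$ fixed in Section~\ref{section2}. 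Once the configuration is pinned down, applying the appropriate lemma is routine, and Remark~\ref{rem1} absorbs the several grading variants of $\mathcal{A}_6$ and $\mathcal{A}_7$ into the single representative $\mathcal{A}_6$ or $\mathcal{A}_7$.
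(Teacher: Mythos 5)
Your outline of the ``hard'' direction follows the paper's strategy (Kemer reduction to $G(B)$, large simple blocks giving $\mathcal{A}_1$ or $\mathcal{A}_2$, then the radical--idempotent patterns of Lemmas \ref{lemmaA_3}--\ref{lemmaA_8}), but the step you defer as the ``main obstacle'' is exactly the substance of the paper's proof, and you give no argument for it. Mere admissibility $B_{i_1}J\cdots JB_{i_k}\ne 0$ never produces those patterns: a chain $e_1j_1e_2j_2e_3\ne 0$ with no return to a common idempotent and no radical factors on both ends is perfectly possible (this is what happens in $UT_3(F)$, which has admissible subalgebras of dimension $3$ but no proper central polynomials at all), so the ``cyclic'' pattern of Lemma \ref{lemmaA_3}, the ``bordered by $J$'' pattern of Lemma \ref{lemmaA_4}, and their analogues in the mixed case must be extracted from the nonzero \emph{central} evaluation $\bar f$ of the proper central polynomial $f$. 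The paper does this by a concrete bookkeeping argument: if no monomial evaluation of $f$ exhibits one of the patterns, then $(1\otimes e_i)\bar m(1\otimes e_i)=0$ for every monomial evaluation $\bar m$, hence $(1\otimes e_i)\bar f(1\otimes e_i)=0$, and by centrality $(1\otimes e_i)\bar f=\bar f(1\otimes e_i)=0$ for each distinguished idempotent; writing $\bar f$ as a sum of pieces each starting or ending with some $E_i=1\otimes e_i$ and multiplying repeatedly by the $E_i$'s one forces $\bar f=0$, contradicting properness. This contradiction argument (carried out once for three copies of $F$, once for $F$ together with $F\oplus cF$, with the case of two copies of $F\oplus cF$ reduced to the latter) is the heart of Theorem \ref{teorema1}; your proposal contains neither it nor any substitute, so the completeness of the case analysis is a genuine gap, not a routine verification.

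The ``easy'' direction is also not established by your argument. Your parenthetical justification --- that a proper central polynomial of $\mathcal{A}_i$ is automatically a proper central polynomial of a generator of $\mathcal{V}$ --- is false: centrality does not pass from a member of a variety up to its generator, and \cite[Remark 1]{GZ2019} only concerns algebras with the \emph{same} $T$-ideal. In fact $exp^\delta$ is not monotone under inclusion of varieties: $\mathcal{A}_3$ is a subalgebra of $UT_4(F)$, hence $\mathcal{A}_3\in var(UT_4(F))$, yet $UT_4(F)$ has no proper central polynomials (as recalled in the introduction), so $exp^\delta(var(UT_4(F)))=0$ while $exp^\delta(\mathcal{A}_3)=3$ by Lemma \ref{exponent Ai}. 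Note also that the paper's own proof establishes only the implication $exp^\delta(\mathcal{V})>2\Rightarrow\mathcal{A}_i\in\mathcal{V}$, so there is nothing there to lean on for the converse; with the paper's convention $exp^\delta(\mathcal{V})=exp^\delta(A)$ for a generator $A$, this converse requires genuine care (it becomes immediate only if one reads $exp^\delta(\mathcal{V})$ as a supremum over the members of $\mathcal{V}$). As written, your monotonicity step does not work.
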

	
	\begin{proof}
		As we recalled in Section \ref{section2}, we may assume that  $\mathcal V = var (G(B)$) where $G(B)$ is the Grassmann envelope of a finite dimensional
		superalgebra $B=B^{(0)}+B^{(1)}$.
		Write $B=B_1\oplus \cdots \oplus B_q+J$, with the $B_i$'s simple superalgebras, and let  $\mbox{exp}^\delta (G(B))=d> 2$.
		
		If, for some $i\le q$, $B_i =M_{k,l}(F)$ then $G(B)\supseteq G(B_i)= M_{k,l}(G)$.
		If $l=0$ and $ k \ge 2$, $M_{k,0}(G)=  G^{(0)}\otimes M_k(F)\supseteq M_2(G^{(0)})$ and, so, $Id(G(B))\subseteq Id (M_2(G^{(0)})) \subseteq Id(M_2(F))= Id({\mathcal A}_1)$,
		as wished.
		If $l>0$,
		$Id(G(B))\subseteq Id(G(B_i))= Id(M_{k,l}(G))\subseteq Id(M_{1,1}(G))= Id({\mathcal A}_2)$ and we are done also in this case.
		
		Suppose now that,  for some $i\le q$, $B_i =M_{k}(F)\oplus cM_k(F)$. Then if $k\ge 2$,
		$G(B_i)=G(M_{k}(F)\oplus cM_k(F))\supseteq G^{(0)}\otimes M_k(F)\supseteq G^{(0)}\otimes M_2(F)$, and as above we get $Id(G(B))\subseteq Id(M_2(F))=
		Id({\mathcal A}_1)$.
		
		Therefore we may assume that, for  $1\le i\le q$, either $B_i=F$ with trivial grading or $B_i= F\oplus cF$.
		
		Recall that $d=\dim D$ where $D$ is a centrally admissible subalgebra of $B$ of maximal dimension.
		We may clearly assume that  $D=B_1\oplus \cdots\oplus B_r$, and, so,
		by definition there exists a proper central polynomial $f$ of $G(B)$ and an evaluation
		$0\ne f(b_1, \ldots, b_r, b_{r+1}, \ldots, b_n)\in Z(G(B))$, where $b_1\in G(B_1), \ldots, b_r\in G(B_r)$,
		$b_{r+1}, \ldots, b_n\in G(B)$.
		
		Since $exp^\delta(G(B))= \dim D=d\ge 3$ at least one of the following three cases occurs:
		\begin{itemize}
			\item[1)]
			$D$ contains three simple components of the type $F$;
			\item [2)]
			$D$ contains  one copy of $F$ and one of $F\oplus cF$;
			
			\item[3)]
			$D$ contains  two copies of $F\oplus cF$.
			
		\end{itemize}
		
		Suppose first that Case 1 occurs. $D$ contains  three orthogonal idempotents $e_1,e_2,e_3$ homogeneous of degree zero.
		Recalling that $f$ is the proper central polynomial corresponding to $D$ we have that in a non-zero evaluation $\bar f$ of $f$  at least three variables are
		evaluated in
		$$
		g_1\otimes e_1,\  g_2\otimes e_2,\  g_3\otimes e_3,
		$$
		where $g_1,g_2,g_3\in G^{(0)}$ are distinct elements.
		Since $G^{(0)}$ is central in $G$ and $f$ is multilinear, we may assume that $g_1=g_2=g_3=1$
		and, so,  the elements $1\otimes e_1,\  1\otimes e_2,\ 1\otimes e_3$ appear in the evaluation of $f$.
		
		Suppose first that a monomial of $f$ can be evaluated into a product of the type
		$$
		(1\otimes e_1)(h_1 \otimes j_1)(1\otimes e_2)(h_2 \otimes j_2) (1\otimes e_3)(h_3 \otimes j_3)(1\otimes e_1)\ne 0,
		$$
		for some $h_1\otimes j_1,h_2\otimes j_2,h_3\otimes j_3\in G^{(0)}\otimes  J^{(0)}\cup G^{(1)} \otimes  J^{(1)}.$
		Hence
		$$
		h_1h_2h_3\otimes e_1j_1e_2j_2e_3j_3e_1\ne 0
		$$
		which says that $e_1j_1e_2j_2e_3j_3e_1\ne 0$, for some homogeneous elements $j_1,j_2,j_3\in J$.
		By Lemma \ref{lemmaA_3} we get that there exists a $\mathbb{Z}_2$-graded subalgebra $\bar B$ of $B$  such that $Id(G(\bar B))\subseteq Id(\mathcal{A}_3)$ and
		we are done in this case. This clearly holds for any permutation of the $e_i$'s, $ i=1,2,3$.

		Hence, if $m$ is a monomial of $f$ whose evaluation $\bar m$ in $\bar f$ is non-zero,
		we may assume that $(1\otimes e_i)\bar m (1\otimes e_i)=0$, $1\le i\le 3$.
		This also says that
		$(1\otimes e_i)\bar f (1\otimes e_i)=0$ and, since $\bar f$ is central,  $(1\otimes e_i)\bar f=\bar f (1\otimes e_i)=0$.

		Next suppose that there exists an evaluation of a monomial of $f$ containing a product of the type
		$$
		(h_1 \otimes j_1)(1\otimes e_1)(h_2\otimes j_2) (1\otimes e_2)
		(h_3\otimes j_3)(1\otimes e_3)(h_4 \otimes j_4)\ne 0,
		$$
		for some $h_1\otimes j_1,h_2\otimes j_2,h_3\otimes j_3, h_4\otimes j_4\in G^{(0)}\otimes  J^{(0)}\cup G^{(1)} \otimes  J^{(1)}.$
		Then
		$$
		h_1h_2h_3h_4\otimes j_1e_1j_2e_2j_3e_3j_4\ne 0,
		$$
		and, so, $j_1e_1j_2e_2j_3e_3j_4\ne 0$ for some homogeneous elements $j_1,j_2,j_3, j_4\in J$.
		By Lemma \ref{lemmaA_4} we have that there exists a $\mathbb{Z}_2$-graded subalgebra $\bar B$ of $B$  such that $Id(G(\bar B))\subseteq Id(\mathcal{A}_4)$ and
		we are done.
		
		Hence we may assume that in $\bar f$, the evaluation of a monomial of $f$ never starts and ends with elements of $J$.
		As remarked above, we may also assume that  $(1\otimes e_i)\bar f=\bar f (1\otimes e_i)=0$, $1\le i\le 3$. Let us set $E_i=1\otimes e_i$, $1\le i\le 3$, and
		write
		$$
		\bar f = E_1\bar f_1+ \bar f_2 E_1+ E_2 \bar f_3 + \bar f_4 E_2+ E_3\bar f_5+ \bar f_6 E_3,
		$$
		
		\noindent where $E_1\bar f_1$  is the sum of the evaluations of the monomials of $f$ starting  with $E_1$,
		$\bar f_2 E_1$ is the  sum of the evaluations of the remaining monomials of $f$ ending with $E_1$,
		$E_2 \bar f_3 + \bar f_4 E_2$ is the sum of the evaluations of the remaining monomials starting or ending with $E_2$,
		and so on.

		Since $E_1\bar f=\bar f E_2=\bar f E_3=0$ and we may assume that
		$E_i\bar f_j E_i=0$ for $i \neq j$,  we get
		$$
		0=E_1\bar f+\bar f E_2+\bar f E_3=
		$$
		$$
		E_1\bar f_1+E_1\bar f_4E_2+E_1\bar f_6E_3+
		E_1\bar f_1 E_2+\bar f_4E_2+
		E_3\bar f_5E_2+
		E_1\bar f_1 E_3+E_2\bar f_3E_3+\bar f_6E_3.
		$$
		Moreover
		$$
		0=E_1\bar f E_2=E_1\bar f_1 E_2 + E_1\bar f_4 E_2,
		$$
		$$
		0=E_1\bar f E_3=E_1\bar f_1 E_3 + E_1\bar f_6 E_3.
		$$
		Then
		$$
		0=E_1\bar f+\bar f E_2+\bar f E_3=E_1\bar f_1 + \bar f_4 E_2
		+ \bar f_6 E_3 + E_3 \bar f_5 E_2 + E_2 \bar f_3 E_3
		$$
		and so
		$$
		E_1\bar f_1 + \bar f_4 E_2 + \bar f_6 E_3=-E_3 \bar f_5 E_2 - E_2 \bar f_3 E_3.
		$$
		
		\bigskip
		\noindent Similarly,
		since $\bar f E_1=E_2 \bar f =E_3 \bar f =0$ we get
		$$
		\bar f_2 E_1 + E_2 \bar f_3 +E_3 \bar f_5 =-E_2 \bar f_6 E_3 - E_3 \bar f_4 E_2.
		$$
		In conclusion
		$$
		\bar f= E_1 \bar f_1+ \bar f_2 E_1  +  E_2 \bar f_3+ \bar f_4 E_2+ E_3 \bar f_5 + \bar f_6 E_3
		=-E_3 \bar f_5 E_2 - E_2 \bar f_3 E_3-E_2 \bar f_6 E_3 - E_3 \bar f_4 E_2.
		$$
		Since $\bar f E_2=0$, it follows that
		$
		0=\bar f E_2=-E_3\bar f_5 E_2 - E_3 \bar f_4 E_2.
		$
		Then
		$$
		\bar f=  - E_2 \bar f_3 E_3-E_2 \bar f_6 E_3
		$$
		and
		we reach a contradiction
		$
		0=\bar f E_3=\bar f.
		$
		This completes Case 1.
		\medskip
		
		We remark that, if $D$ contains two copies of $F\oplus cF$, then it also contains one copy of $F$ and one copy of $F\oplus cF$.
		This says that Case 3 can be deduced from Case 2.
		Hence throughout we shall assume that $D$ contains  one copy of $F$ and one of $F\oplus cF$ and we only deal with Case 2.
		\medskip
		
		Hence let $D\supseteq Fe_1\oplus (Fe_2\oplus cFe_2)$.
		Then  in the non-zero evaluation $\bar f$ of the central polynomial $f$ at least one variable is evaluated in $1\otimes e_1$
		and one variable in $G^{(0)}\oplus cG^{(1)}.$ Suppose first that there exists an evaluation of a monomial of $f$ such that
		$$
		(1\otimes e_1)(h_1\otimes j_1)(g_1\otimes e_2)(h_2\otimes j_2)(1\otimes e_1) \ne 0,
		$$
		for some $h_1\otimes j_1,h_2\otimes j_2, \in G^{(0)}\otimes  J^{(0)}\cup G^{(1)} \otimes  J^{(1)}$ and $ g_1 \in G^{(0)}$.
		Then
		$$
		h_1g_1h_2\otimes e_1j_1e_2j_2e_1 \ne 0
		$$
		and by Lemma \ref{lemmaA_8} we obtain that there exists a $\mathbb{Z}_2$-graded subalgebra $\bar B$ of $B$  such that either
		$Id(G(\bar B))\subseteq Id( \mathcal{A}_8)$ or $Id(G(\bar B))\subseteq Id(\mathcal{A}_9)$.

		If there exists an evaluation of a monomial of $f$ such that
		$$
		(1\otimes e_1)(h_1\otimes j_1)(g_1\otimes ce_2)(h_2\otimes j_2)(1\otimes e_1) \ne 0,
		$$ for some $h_1\otimes j_1,h_2\otimes j_2, \in G^{(0)}\otimes  J^{(0)}\cup G^{(1)} \otimes  J^{(1)}$ and $ g_1 \in G^{(1)}$,
		then
		$
		e_1j_1ce_2j_2e_1 \ne 0.
		$
		If we put $j'_1 = j_1c$
		we are done by the previous case.

		Next let us consider  the case when there exists an evaluation of a monomial of $f$ such that
		$$
		(g_1\otimes e_2)(h_1\otimes j_1)(1\otimes e_1)(h_2\otimes j_2)(g_2\otimes e_2)\ne 0,
		$$
		for some $h_1\otimes j_1,h_2\otimes j_2, \in G^{(0)}\otimes  J^{(0)}\cup G^{(1)} \otimes  J^{(1)}, g_1, g_2 \in G^{(0)}$.
		By Lemma \ref{lemmaA_5}, there exists a $\mathbb{Z}_2$-graded subalgebra $\bar B$ of $B$  such that $Id(\mathcal{A}_5) \supseteq Id(G(\bar B))$.

		Hence we may assume that if $\bar m\ne 0$ is the evaluation in $\bar f$ of a monomial of $f$,
		$(1\otimes e_i)\bar m (1\otimes e_i)=0$, $1\le i\le 2$. This also says that
		$(1 \otimes e_i) \bar f (1 \otimes e_i) =0$ and $(1 \otimes e_i) \bar f= \bar f (1 \otimes e_i) =0$, $1\le i\le 2$.
		
		Next suppose that a monomial of $f$ has an evaluation of the type
		$$
		(h_1 \otimes j_1)(1\otimes e_1)(h_2\otimes j_2) (g_1\otimes e_2)
		(h_3\otimes j_3)\ne 0,
		$$
		for some $h_1\otimes j_1,h_2\otimes j_2,h_3\otimes j_3\in G^{(0)}\otimes  J^{(0)}\cup G^{(1)} \otimes  J^{(1)}, g_1 \in G^{(0)}$.

		Suppose $j_1e_1j_2e_2j_3\ne 0$. By Lemma \ref{lemmaA_6} there exists a $\mathbb{Z}_2$-graded subalgebra $\bar B$ of $B$  such that $Id(G(\bar B))\subseteq
		Id(\mathcal{A}_6)$.  The same result holds when
		$j_1e_1j_2ce_2j_3\ne 0$.

		Now we consider the case when a monomial of $f$ has an evaluation of the type
		$$
		(h_1 \otimes j_1)(g_1\otimes e_2)(h_2\otimes j_2) (1\otimes e_1)
		(h_3\otimes j_3)\ne 0,
		$$
		for some $h_1\otimes j_1,h_2\otimes j_2,h_3\otimes j_3\in G^{(0)}\otimes  J^{(0)}\cup G^{(1)} \otimes  J^{(1)}$ and $g_1 \in G^{(0)}.$
		By Lemma \ref{lemmaA_7} there exists a $\mathbb{Z}_2$-graded subalgebra $\bar B$ of $B$  such that $Id(G(\bar B))\subseteq Id(\mathcal{A}_7)$.  The same result
		holds when
		$j_1ce_2j_2e_1j_3\ne 0$.

		As remarked before,  if $\bar m\ne 0,$ then  $(1 \otimes e_i) \bar m (1 \otimes e_i) =0$ and also  $(1 \otimes e_i) \bar f= \bar f (1 \otimes e_i) =0$, $\forall
		i=1,2$. If we denote $(1 \otimes e_i)=E_i$, then  we may write, as in the previous case,
		\begin{equation} \label{eq6}
			\bar f = E_1\bar f_1+ \bar f_2 E_1+ E_2 \bar f_3 + \bar f_4 E_2.
		\end{equation}
		From $E_1\bar f=0$ we get
		$E_1\bar f_1  + E_1\bar f_4 E_2= 0$
		and so
		$E_1\bar f_1 E_2 + E_1\bar f_4 E_2= 0.$
		It follows that $E_1\Bar{f}_1 = E_1\Bar{f}_1E_2.$
		
		Moreover, since $\bar fE_2 =0$, from (\ref{eq6}) similarly we get
		$
		E_1\bar f_1E_2+ \bar f_4 E_2= 0
		$
		and so
		$
		E_1\bar f_1+\bar f_4 E_2= 0.
		$
		As a consequence we obtain that
		$$
		\bar f =  \bar f_2 E_1+ E_2 \bar f_3.
		$$
		Since
		$0= E_2\bar f =  E_2\bar f_2 E_1+ E_2 \bar f_3,$
		we get $\bar f =  \bar f_2 E_1+ E_2 \bar f_3= \bar f_2 E_1 - E_2\bar f_2 E_1$.
		It follows that
		$0=\bar f E_1= \bar f_2 E_1 - E_2\bar f_2 E_1 = \bar f \neq 0$,  a contradiction.
	\end{proof}

	Now we are able to characterize the varieties with proper central exponent equal to two.
	
	\begin{Corollary}
		Let $\mathcal{V}$ be a variety of PI-algebras over a field $F$ of characteristic zero. Then  $exp^\delta(\mathcal V)=2$ if and only if  $A_i\not \in
		\mathcal{V}$ for every $i\in \{1, \ldots , 9\}$ and  $D \in \mathcal{V}$ or $D_0 \in \mathcal{V}$ or $G \in \mathcal{V}$.  \end{Corollary}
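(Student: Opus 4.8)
The plan is to deduce the Corollary at once from Theorem~\ref{teorema1} and the classification of the varieties of almost polynomial $\delta$-growth established in \cite{GLP}. Throughout I use that $\mathcal V$ is generated by a PI-algebra $A$ with $\mathrm{Id}(A)=\mathrm{Id}(\mathcal V)$ and $exp^\delta(A)=exp^\delta(\mathcal V)$: one may take $A$ to be the relatively free algebra of $\mathcal V$ of countable rank, or, by Kemer's theorem, an algebra $G(B)$ with $B$ finite dimensional; this is legitimate since a variety on which $exp^\delta$ is finite satisfies a nontrivial identity.

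Suppose first that $exp^\delta(\mathcal V)=2$. Since $exp^\delta(\mathcal V)$ is not greater than two, Theorem~\ref{teorema1} gives $\mathcal A_i\notin\mathcal V$ for every $i\in\{1,\dots,9\}$; and since $exp^\delta(A)=exp^\delta(\mathcal V)=2\ge 2$, the result of \cite{GLP} recalled in Section~\ref{section2} applies to $A$ and yields $\mathrm{Id}(A)\subseteq\mathrm{Id}(D)$, or $\mathrm{Id}(A)\subseteq\mathrm{Id}(D_0)$, or $\mathrm{Id}(A)\subseteq\mathrm{Id}(G)$, i.e. $D\in\mathcal V$, or $D_0\in\mathcal V$, or $G\in\mathcal V$. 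Conversely, assume that $\mathcal A_i\notin\mathcal V$ for every $i$ and that, say, $D\in\mathcal V$ (the cases $D_0\in\mathcal V$ and $G\in\mathcal V$ are handled in the same way). The contrapositive of Theorem~\ref{teorema1} gives $exp^\delta(\mathcal V)\le 2$, so everything reduces to proving $exp^\delta(\mathcal V)\ge 2$, which together with the previous inequality yields $exp^\delta(\mathcal V)=2$.

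This implication, $D\in\mathcal V\Rightarrow exp^\delta(\mathcal V)\ge 2$, is the only substantial point and I expect it to be the main obstacle. It is \emph{not} enough to observe that $exp^\delta(D)=2$ (which indeed holds, the semisimple part of $D$ having dimension two, so that $exp^\delta(D)\le exp(D)\le 2$, with equality because $\mathrm{var}(D)$ has exponential $\delta$-growth) and that $\mathrm{var}(D)\subseteq\mathcal V$: contrary to the ordinary codimension sequence, $c_n^\delta$ is not monotone under variety inclusion, e.g. $exp^\delta(\mathrm{var}(F\oplus F))=1$ while $exp^\delta(\mathrm{var}(UT_2(F)))=0$, so a lower bound on $exp^\delta$ need not survive when passing to a larger variety. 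What makes the argument go through is that $\mathrm{var}(D)$, $\mathrm{var}(D_0)$, $\mathrm{var}(G)$ are precisely the minimal varieties of exponential $\delta$-growth: the classification of \cite{GLP} characterizes the $T$-ideals of exponential $\delta$-growth as exactly those contained in one of $\mathrm{Id}(D)$, $\mathrm{Id}(D_0)$, $\mathrm{Id}(G)$, and since $\mathrm{Id}(\mathcal V)\subseteq\mathrm{Id}(D)$ we are done. Alternatively, one can argue directly in the spirit of the previous section: writing $\mathcal V=\mathrm{var}(G(B))$ and noting that $D\in\mathcal V$ forces $\mathcal V=\mathrm{var}(G(B\oplus D))$ with $D$ trivially graded, the even part of $B\oplus D$ contains two orthogonal idempotents linked through the Jacobson radical on both sides, and from this configuration one extracts a centrally admissible subalgebra of $B\oplus D$ of dimension two, giving again $exp^\delta(\mathcal V)\ge 2$.
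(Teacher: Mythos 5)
Your main route is the same as the paper's: the forward direction combines Theorem \ref{teorema1} with the result of \cite{GLP} recalled in Section \ref{section2}, and for the converse you use Theorem \ref{teorema1} to get $exp^\delta(\mathcal V)\le 2$ and then need ``$D\in\mathcal V$ (or $D_0$, or $G$) implies $exp^\delta(\mathcal V)\ge 2$''; the paper disposes of this last step by quoting \cite{GLP} in biconditional form, exactly as your first justification does. You are right that this implication is the only substantial point and that it does not follow from monotonicity of $c_n^\delta$. However, neither of your two attempts actually secures it: minimality of $var(D)$, $var(D_0)$, $var(G)$ constrains their proper subvarieties, not the varieties containing them, so it is a non sequitur; and the phrase ``exactly those contained in one of $\I(D)$, $\I(D_0)$, $\I(G)$'' is the full biconditional, whereas Section \ref{section2} records only the one implication $exp^\delta(A)\ge 2\Rightarrow \I(A)\subseteq\I(D)$ or $\I(D_0)$ or $\I(G)$. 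So on this route you are, like the paper, resting the entire converse on the precise statement proved in \cite{GLP}, which is exactly the point you said needed justification.

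Your alternative direct argument is genuinely flawed. A centrally admissible subalgebra of $B\oplus D$ requires a proper central polynomial of the whole algebra $G(B\oplus D)$, and a multilinear polynomial is central for a direct sum only if it is central on each summand; a proper central polynomial of $D$, such as $[x_1,x_2][x_3,x_4]$, need not be central on $G(B)$, and when it is not an identity of $G(B)$ it is not even a central polynomial of the sum. This is precisely the non-monotonicity you flagged, and it is fatal here: $D$ is a subalgebra of $UT_3(F)$, which has no proper central polynomials (see the Introduction and \cite{GZ2018}), so for $\mathcal V=var(UT_3(F))$ one has $D\in\mathcal V$ while $UT_3(F)\oplus D$ admits no proper central polynomial whatsoever; no centrally admissible subalgebra of positive dimension can be extracted and $exp^\delta(\mathcal V)=0$. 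Hence ``$D\in\mathcal V$'' alone cannot yield $exp^\delta(\mathcal V)\ge 2$ by any argument of the kind in your last sentence, and the converse really does stand or fall with the exact form of the classification in \cite{GLP}; note moreover that none of the $\mathcal A_i$ lies in $var(UT_3(F))$, since each fails the identity $[x_1,x_2][x_3,x_4][x_5,x_6]$ of $UT_3(F)$, so this example deserves to be checked against the statement of the Corollary itself and not only against your proof of it.
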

	\begin{proof}
		By \cite{GLP} exp$^\delta (\mathcal V)\geq 2$ if and only if $D \in \mathcal V$ or $D_0 \in \mathcal V$ or $G \in \mathcal V$. Moreover, by Theorem
		\ref{teorema1}, exp$^\delta (\mathcal V)\leq 2$ if and only if $A_i \not \in \mathcal V$ for $i\in \{1, \ldots ,9\}$ and so the proof is completed.
	\end{proof}
	
	\medskip
	Next we recall the following.
	
	\begin{Definition}
		Let $\mathcal{V}$ be a variety of algebras. We say that $\mathcal{V}$ is minimal of proper central exponent $d\geq 2$ if exp$^\delta(\mathcal{V})=d$ and for
		every proper subvariety $\mathcal{U}\subset \mathcal{V}$ we have that exp$^\delta(\mathcal{U})<d$.
	\end{Definition}
	
	\medskip
	
	We denote $var(\mathcal{A}_i)=\mathcal{V}_i$, for $1\le i\le 9$.

	\medskip

	\begin{Proposition} \label{differentvarieties}
		For  $1\le i,j\le 7$, $i\neq j$, we have that  $\mathcal{V}_i \not \subseteq \mathcal{V}_j$.
		Moreover $\mathcal{V}_8, \mathcal{V}_9 \not \subseteq \mathcal{V}_i$, $1\le i\le 7$.
	\end{Proposition}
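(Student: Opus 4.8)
The plan is to treat each non-inclusion separately, exhibiting in every case a multilinear polynomial which lies in the $T$-ideal of one of the two algebras but not in the other. A first batch of pairs is handled by the proper central exponent: since $exp^\delta(\mathcal A_1)=exp^\delta(\mathcal A_2)=4$ whereas $exp^\delta(\mathcal A_i)=3$ for $3\le i\le 9$ by Lemma~\ref{exponent Ai}, and the $\delta$-exponent is monotone under inclusion of varieties, we get at once $\mathcal V_1,\mathcal V_2\not\subseteq\mathcal V_j$ for all $j\ge 3$. It then remains to deal with the pairs in which $\mathcal A_1$ or $\mathcal A_2$ occurs (the opposite inclusions, and $\mathcal V_1$ versus $\mathcal V_2$), and with the pairs entirely among $\mathcal A_3,\dots,\mathcal A_9$.

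For the pairs involving $\mathcal A_1=M_2(F)$ or $\mathcal A_2=M_{1,1}(G)$ I would use two test polynomials. First, the standard polynomial $s_4$ lies in $Id(\mathcal A_1)$ by the Amitsur--Levitzki theorem, but it lies in none of the $Id(\mathcal A_i)$ for $2\le i\le 9$: indeed $\mathcal A_3$ and $\mathcal A_4$ contain a copy of $UT_3(F)$, while $\mathcal A_2$ and $\mathcal A_5,\dots,\mathcal A_9$ each contain a subalgebra isomorphic to the infinite dimensional Grassmann algebra $G$ (this is clear for $\mathcal A_5$, which contains $(e_{11}+e_{33})\otimes G$; for the others it is the subalgebra generated by a Grassmann-coefficiented element squaring to an idempotent, using $G^{(0)}\oplus cG^{(1)}\cong G$), and neither $UT_3(F)$ nor $G$ satisfies $s_4$. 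This yields $\mathcal V_i\not\subseteq\mathcal V_1$ for every $i\ge 2$. Dually, $f_0=[[[x_1,x_2],[x_3,x_4]],x_5]$ lies in $Id(\mathcal A_2)=Id(M_{1,1}(G))$ — the key point being that in $M_{1,1}(G)$ a commutator $[a,b]$ has its two diagonal entries equal, so that a commutator of two such off-diagonal parts is a scalar and hence central — whereas $f_0\notin Id(\mathcal A_1)$ (it evaluates to $2e_{12}$ on $M_2(F)$), $f_0\notin Id(\mathcal A_3),Id(\mathcal A_4)$ (it is not an identity of $UT_3(F)$), $f_0\notin Id(\mathcal A_6),Id(\mathcal A_7)$ because by Proposition~\ref{78} these $T$-ideals are generated by polynomials of degree $7$ and so contain no nonzero element of degree $5$, and $f_0\notin Id(\mathcal A_5),Id(\mathcal A_8),Id(\mathcal A_9)$ by a direct evaluation. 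Together with the exponent step, this settles all pairs in which $\mathcal A_1$ or $\mathcal A_2$ appears, including $\mathcal V_1$ versus $\mathcal V_2$.

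For the remaining pairs, among $\mathcal A_3,\dots,\mathcal A_9$, I would use the following multilinear polynomials, each of which vanishes on the algebra indicated (by the matrix realizations of Section~\ref{section2} and of the previous section, and by Proposition~\ref{78}) and admits an explicit nonzero value on each of the algebras from which that one must be separated: the product of four commutators $[x_1,x_2][x_3,x_4][x_5,x_6][x_7,x_8]$ is an identity of $\mathcal A_3$ (inside $UT_4(F)$ its commutators are strictly upper triangular) but not of $\mathcal A_4,\dots,\mathcal A_9$; the polynomial $x_0[x_1,x_2][x_3,x_4][x_5,x_6]x_7$ is an identity of $\mathcal A_4$ (three commutators land in the ideal $N^3$ of $UT_5(F)$, which is annihilated on both sides inside $\mathcal A_4$ because $a_{11}=a_{55}=0$) but not of $\mathcal A_3,\mathcal A_5,\dots,\mathcal A_9$; both $[x_1,x_2,x_3][x_4,x_5,x_6][x_7,x_8,x_9]$ and $[[x_1,x_2],x_3][x_4,x_5][[x_6,x_7],x_8]$ are identities of $\mathcal A_5$ (in $UT_3(G)$ with $a_{11}=a_{33}$ and $a_{22}\in G^{(0)}$ every second commutator is strictly upper triangular) but fail on the algebras to be distinguished from $\mathcal A_5$; and the generators $x_1[x_2,x_3][x_4,x_5,x_6]x_7$ of $Id(\mathcal A_6)$ and $x_1[x_2,x_3,x_4][x_5,x_6]x_7$ of $Id(\mathcal A_7)$ are not identities of $\mathcal A_3,\mathcal A_4,\mathcal A_5,\mathcal A_8,\mathcal A_9$, nor of each other's algebra, as one sees by evaluating on the natural idempotents together with a pair of anticommuting Grassmann generators.

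The only real obstacle is bookkeeping: one must run through all the ordered pairs and, for the five algebras $\mathcal A_3,\mathcal A_4,\mathcal A_5,\mathcal A_8,\mathcal A_9$ for which explicit generators of the $T$-ideal are not available, verify by hand the asserted nonzero evaluations of the test polynomials above. Each such verification is a short computation with matrix units and Grassmann generators, using nothing beyond the constructions already made in the previous section; the single most delicate point is the membership $f_0\in Id(M_{1,1}(G))$, which rests on the observation about the diagonal of a commutator in $M_{1,1}(G)$ noted above.
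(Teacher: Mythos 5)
Your proposal is essentially the paper's own proof: the authors likewise dispose of $\mathcal{V}_1,\mathcal{V}_2\not\subseteq\mathcal{V}_j$ ($j\ge 3$) by comparing exponents (they use $exp$ rather than $exp^\delta$, which is immaterial here), use $St_4$ to get $\mathcal{V}_i\not\subseteq\mathcal{V}_1$, use $[[x_1,x_2],[x_3,x_4],x_5]\in Id(\mathcal{A}_2)$ to separate from $\mathcal{V}_2$, and then exhibit exactly your list of separating identities for $\mathcal{A}_3,\mathcal{A}_4,\mathcal{A}_6,\mathcal{A}_7$ and the polynomial $[x_1,x_2,x_3][x_4,x_5][x_6,x_7,x_8]$ for $\mathcal{A}_5$, leaving the nonvanishing evaluations as routine checks, just as you do; your only deviations (handling $\mathcal{V}_2\not\subseteq\mathcal{V}_1$ via $St_4$ and a copy of $G$ in $M_{1,1}(G)$ instead of the paper's $[[x_1,x_2]^2,x_3]$, and the degree argument for $f_0\notin Id(\mathcal{A}_6),Id(\mathcal{A}_7)$) are correct. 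One side claim is inaccurate, though harmless: the polynomial $[x_1,x_2,x_3][x_4,x_5,x_6][x_7,x_8,x_9]$ does \emph{not} fail on all algebras to be distinguished from $\mathcal{A}_5$ --- in $\mathcal{A}_8$ and $\mathcal{A}_9$ every triple commutator has zero diagonal (both corner and middle-block) part, hence lies in the span of the off-diagonal matrix units, and any product of three such elements vanishes, so this polynomial is actually an identity of $\mathcal{A}_8$ and $\mathcal{A}_9$; the separation $\mathcal{V}_8,\mathcal{V}_9\not\subseteq\mathcal{V}_5$ therefore must (and does) rest on your second polynomial $[[x_1,x_2],x_3][x_4,x_5][[x_6,x_7],x_8]$, the one the paper uses.
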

	\begin{proof}
		Since $exp(\mathcal{V}_1)=exp(\mathcal{V}_2)=4$ and, for all $i \ge 3$,   $exp(\mathcal{V}_i)=3$,
		then, for $j=1,2$, $\mathcal{V}_j \not \subseteq \mathcal{V}_i$.
		
		Since $St_4,$ the standard polynomial of degree four, is an identity of  $\mathcal{A}_1$ but   $St_4 \not\in Id(\mathcal{A}_i)$, for all $i \ge 3$, then
		$\mathcal{V}_i \not \subseteq \mathcal{V}_1$.
		Moreover, $[[x_1,x_2]^2,x_3] \in Id(\mathcal{A}_1)$ but $[[x_1,x_2]^2,x_3] \not \in Id(\mathcal{A}_2)$ hence $\mathcal{V}_2 \not \subseteq \mathcal{V}_1$.
		Also, $[[x_1,x_2],[x_3,x_4],x_5]\in Id(\mathcal{A}_2)$ but it is not in $Id(A_i)$, $i =3, \ldots , 9$, then $\mathcal{V}_i \not \subseteq \mathcal{V}_2$. Since
		$[[x_1,x_2],[x_3,x_4],x_5]\in Id(\mathcal{A}_2)$ and
		$[[x_1,x_2],[x_3,x_4],x_5]\not \in Id(\mathcal{A}_1)$, then  $\mathcal{V}_1 \not \subseteq \mathcal{V}_2$.
		
		The proof is completed by an easy check of the following  statements.
		
		\begin{itemize}
			\item[-]
			
			$[x_1,x_2][x_3,x_4][x_5,x_6][x_7,x_8] \in Id(\mathcal{A}_3)$ but
			$\not \in Id(\mathcal{A}_i)$, $4\le i\le 9$,
			\smallskip

			\item[-]
			
			$x_1[x_2,x_3][x_4,x_5][x_6,x_7]x_8 \in Id(\mathcal{A}_4)$ but $\not \in  Id(\mathcal{A}_i)$, $3\le i\le  9, \ i \ne 4 $
			\smallskip
			
			\item[-]
			$[x_1,x_2, x_3][x_4,x_5][x_6,x_7,x_8] \in Id(\mathcal{A}_5)$ but
			$ \not \in Id(A_i)$,  $3\le i\le 9$, $i \neq 5$,
			
			\smallskip
			
			\item[-]
			$x_1[x_2,x_3][x_4,x_5, x_6]x_7 \in Id(\mathcal{A}_6)$ but $  \not \in   Id(\mathcal{A}_i)$ $i=3, \dots , 9$, $i \neq 6$,
			
			\smallskip
			
			\item[-] $x_1[x_2,x_3, x_4][x_5, x_6]x_7 \in Id(\mathcal{A}_7)$ but $  \not \in Id(\mathcal{A}_i)$,  $3\le i\le 9$, $i \neq 7$.
		\end{itemize}
	\end{proof}
	
	As a consequence we get the following
	
	\begin{Corollary} The varieties  $\mathcal{V}_1$ and $\mathcal{V}_2$ are minimal  of proper central exponent equal to $4.$
		The varieties $\mathcal{V}_i,$  $3\le i\le 7$, are minimal of proper central exponent equal to $3.$
	\end{Corollary}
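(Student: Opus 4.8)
The plan is to derive the statement directly from the characterization in Theorem~\ref{teorema1} together with the incomparability relations established in Proposition~\ref{differentvarieties}, using only the values of the proper central exponent already computed, with essentially no new computation.

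First I would record the relevant exponents. Since $exp^\delta(M_2(F))=exp^\delta(M_{1,1}(G))=4$ we have $exp^\delta(\mathcal{V}_1)=exp^\delta(\mathcal{V}_2)=4$, and by Lemma~\ref{exponent Ai} we have $exp^\delta(\mathcal{V}_i)=3$ for $3\le i\le 7$. Write $d_i$ for this common value (so $d_1=d_2=4$ and $d_3=\cdots=d_7=3$); in particular $exp^\delta(\mathcal{V}_i)=d_i$, which is the first half of the definition of minimality. To prove the second half it then remains to show that every variety $\mathcal{U}$ with $\mathcal{U}\subseteq\mathcal{V}_i$ and $exp^\delta(\mathcal{U})\ge d_i$ must coincide with $\mathcal{V}_i$.

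The core step is as follows. Fix $i$ with $1\le i\le 7$ and suppose $\mathcal{U}\subseteq\mathcal{V}_i$ satisfies $exp^\delta(\mathcal{U})\ge d_i\ge 3>2$. By Theorem~\ref{teorema1} there is an index $j$, $1\le j\le 9$, with $\mathcal{A}_j\in\mathcal{U}$, hence $\mathcal{V}_j=var(\mathcal{A}_j)\subseteq\mathcal{U}\subseteq\mathcal{V}_i$. Now I invoke Proposition~\ref{differentvarieties}: if $1\le j\le 7$ with $j\ne i$ then $\mathcal{V}_j\not\subseteq\mathcal{V}_i$, a contradiction; and if $j\in\{8,9\}$ then again $\mathcal{V}_j\not\subseteq\mathcal{V}_i$, a contradiction. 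Therefore $j=i$, so $\mathcal{V}_i\subseteq\mathcal{U}\subseteq\mathcal{V}_i$ and $\mathcal{U}=\mathcal{V}_i$. Consequently no proper subvariety of $\mathcal{V}_i$ can have proper central exponent $\ge d_i$, which is exactly the asserted minimality. The cases $i=1,2$ and $i=3,\dots,7$ are handled uniformly by this argument, the only difference being the numerical value of $d_i$.

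There is no real obstacle left once Theorem~\ref{teorema1} and Proposition~\ref{differentvarieties} are in hand; the one point deserving a line of care is that minimality requires the strict drop $exp^\delta(\mathcal{U})<d_i$ for proper subvarieties, so the argument should be phrased as a dichotomy: either $exp^\delta(\mathcal{U})\le 2<d_i$, or $exp^\delta(\mathcal{U})>2$ and the step above forces $\mathcal{U}=\mathcal{V}_i$, so that a proper subvariety necessarily falls in the first alternative. This also makes transparent why only the seven algebras $\mathcal{A}_1,\dots,\mathcal{A}_7$ enter the claim and why the precise list of incomparabilities in Proposition~\ref{differentvarieties} is exactly what is needed; since that proposition does not decide whether $\mathcal{V}_8\subseteq\mathcal{V}_9$ or $\mathcal{V}_9\subseteq\mathcal{V}_8$, the varieties $\mathcal{V}_8$ and $\mathcal{V}_9$ are (correctly) left out of the minimality statement.
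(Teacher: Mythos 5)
Your proposal is correct and follows essentially the same route as the paper: compute the exponents via Lemma \ref{exponent Ai}, then for a subvariety $\mathcal{U}\subseteq\mathcal{V}_i$ with $exp^\delta(\mathcal{U})>2$ use Theorem \ref{teorema1} to find $\mathcal{A}_j\in\mathcal{U}$ and Proposition \ref{differentvarieties} to force $\mathcal{V}_j=\mathcal{U}=\mathcal{V}_i$. Your explicit handling of the strict inequality for proper subvarieties (in fact showing they have $exp^\delta\le 2$) is a slightly sharper phrasing of the same argument, not a different method.
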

	\begin{proof} By Lemma \ref{exponent Ai},
		exp$^\delta(\mathcal{V}_1)=$ exp$^\delta(\mathcal{V}_2)=4$ and  exp$^\delta(\mathcal{V}_i)=3$, for  $3\le i\le 7$. Now,
		let $\mathcal{U}$ be a proper subvariety of $\mathcal{V}_i$ for some $i \in \{ 1, \ldots , 7\}$,
		and suppose  that exp$^\delta(\mathcal{U})=$ exp$^\delta(\mathcal{V}_i)> 2$.
		
		By Theorem \ref{teorema1}, there exist $j \in \{1, \ldots , 9\}$ such that $\mathcal{A} _j \in \mathcal{U}$.
		Hence $ \mathcal{V}_j\subseteq \mathcal{U}\subset \mathcal{V}_i$ and by  Proposition \ref{differentvarieties},  we obtain that
		$\mathcal{V}_j=\mathcal{U}=\mathcal{V}_i$, a contradiction.
	\end{proof}
	
	\bigskip


\begin{thebibliography}{99}
		
		\bibitem{AJK} E. Aljadeff, G. Janssens, Y. Karasik, {\em The polynomial part of the codimension growth of affine PI algebras}, Adv. Math. {\bf 309} (2017),
		487--511.
		
		
		\bibitem{AGPR} E. Aljadeff, A. Giambruno, C. Procesi, A.  Regev, Rings with Polynomial Identities and Finite Dimensional Representations of Algebras, Coloquium
		Publications vol {\bf  66}
		AMS, Providence, RI, 2020.
		
		
		\bibitem{B} A. Berele, {\em Properties of hook Schur functions with applications to p.i. algebras,}
		Adv. in Appl. Math. {\bf 41} (2008), no. 1, 52--75.
		
		\bibitem{BR} A. Berele, A. Regev, {\em Asymptotic behaviour of codimensions of p. i. algebras satisfying Capelli identities,} Trans. Amer. Math. Soc. {\bf 360}
		(2008), no. 10, 5155--5172.
		
		
		\bibitem{Formanek} E. Formanek, {\em Central polynomials for matrix rings}, J. Algebra {\bf 23} (1972), 129--132.
		
		
		\bibitem{GLP} A. Giambruno, D. La Mattina, C. Polcino Milies, {\em  On almost polynomial growth of proper central polynomials},Proc. Amer. Math. Soc. {\bf 152} (2024), 4569--4584.
		
		
		\bibitem{GZ1} A. Giambruno,  M. Zaicev, {\em  On Codimension growth of finitely generated associative algebras},
		Adv. Math. {\bf 140} (1998), 145--155.
		
		\bibitem{GZ2} A. Giambruno,  M. Zaicev, {\em Exponential codimension growth of P.I. algebras:
			An exact estimate}, Adv. Math. {\bf 142} (1999), 221--243.
		
		\bibitem{GZ2000}
		A. Giambruno, M. Zaicev, {\em  A characterization of varieties of associative algebras of
			exponent two},
		Serdica {\bf 26} (2000), 245--252.
		
		
		\bibitem{GZ2003AM}
		A. Giambruno, M. Zaicev, {\em
			Minimal varieties of algebras of
			exponential growth},
		Adv. Math.{\bf 174} (2003), 310–323.
		
		\bibitem{GZbook} A. Giambruno, M. Zaicev,
		Polynomial Identities and Asymptotic Methods, AMS, Mathematical
		Surveys and Monographs Vol. {\bf 122}, Providence, R.I., 2005.
		
		\bibitem{GZ2018} A. Giambruno, M. Zaicev,
		{\em Central polynomials and growth functions},  Israel J. Math. {\bf 226} (2018), 15--28.
		
		\bibitem{GZ2019} A. Giambruno, M. Zaicev,
		{\em Central polynomials of associative algebras and their growth}, Proc. Amer. Math. Soc.  {\bf 147}, Number 3 (2019),  909–919.
		
		
		\bibitem{Kaplansky} I. Kaplansky
		{\em Problems in the theory of rings
			Report of a Conference of Linear Algebras}, vol. 502, June 1956, National Academy of Sciences–National Research Council, Washington, Publ. (1957), pp. 1-3
		
		
		\bibitem{kemer0} A. R. Kemer, {\em $T$-ideals with power growth of the codimensions are Specht}
		Sibirski  Matematiceski  Zurnal {\bf 19} (1978), 54--69; English translation: Siberian Mathematical Journal
		{\bf 19} (1978), 37--48.
		
		\bibitem{kemer} A. R. Kemer, Ideals of Identities of
		Associative Algebras, AMS Translations of Mathematical Monograph,
		Vol. {\bf 87}, 1988.
		
		
		\bibitem{Razmyslov} Ju. P. Razmyslov, {\em A certain problem of Kaplansky}, (Russian)
		Izv. Akad. Nauk SSSR Ser. Mat. {\bf 37} (1973), 483--501.
		
		\bibitem{reg1} A. Regev, {\em Existence of identities in
			$A \otimes B$}, Israel J. Math. {\bf  11} (1972), 131--152.
		
		\bibitem{regev2016} A. Regev, {\em Growth for central polynomials}, Comm. Algebra {\bf 44} (2016), 4411--4421.
		
	\end{thebibliography}
\end{document}